\documentclass[a4paper,12pt]{amsart}

\usepackage{float}
\usepackage{euscript,eufrak,verbatim}
\usepackage{graphicx}
\usepackage[usenames]{color}
\usepackage[colorlinks,linkcolor=red,anchorcolor=blue,citecolor=blue]{hyperref}
\usepackage{bm}
\usepackage{amsmath}
\usepackage{amsthm}
\usepackage[all]{xy}
\usepackage{graphicx}
\usepackage{amssymb}
\usepackage{enumerate}

\newtheorem{thm}{Theorem}[section]

\newtheorem{lem}[thm]{Lemma}
\newtheorem{cor}[thm]{Corollary}

\def\N{\mathbb{N}}

\def\N{\mathbb{N}}
\def\R{\mathbb{R}}

\newcommand{\ii}{\mathbf{i}}

\newcommand{\jj}{\mathbf{j}}
\newcommand{\zz}{\mathbf{z}}
\newcommand{\kk}{\mathbf{k}}
\newcommand{\llll}{\mathbf{l}}
\newcommand{\sss}{\mathbf{s}}

\makeatletter 
\@addtoreset{equation}{section}
\numberwithin{equation}{section}

\title[Spectrum of weighted Birkhoff average]{Spectrum of weighted	 Birkhoff average}

\author{Bal\'azs B\'ar\'any$^{1,2}$}
\address{$^1$ Department of Stochastics, Institute of Mathematics, Budapest University of Technology and Economics,	M\H{u}egyetem rkp. 3., H-1111 Budapest, Hungary}
\address{$^2$ MTA-BME Stochastics Research Group, M\H{u}egyetem rkp. 3., H-1111 Budapest, Hungary}
\email{balubsheep@gmail.com}

\author{Micha\l\ Rams$^3$}
\address{$^3$ Institute of Mathematics, Polish Academy of Sciences, ul. \'Sniadeckich 8, 00-656 Warszawa, Poland}
\email{rams@impan.pl}

\author{Ruxi Shi$^4$}
\address{$^4$ Sorbonne Universit\'e, LPSM, 75005 Paris, France}
\email{ruxi.shi@upmc.fr}

\thanks{{We would like to thank the anonymous referee her/his valuable comments and suggestions, which increased the level of presentation significantly.} Bal\'azs B\'ar\'any acknowledges support from grants OTKA K123782 and OTKA FK134251. Micha\l\ Rams was supported by National Science Centre grant 2019/33/B/ST1/00275 (Poland).}
\begin{document}
	
	\maketitle

\begin{abstract}
	Let $\{s_n\}_{n\in\N}$ be a decreasing nonsummable sequence of positive reals. In this paper, we investigate the weighted Birkhoff average $\frac{1}{S_n}\sum_{k=0}^{n-1}s_k\phi(T^kx)$ on aperiodic irreducible subshift of finite type $\Sigma_{\bf A}$ where $\phi: \Sigma_{\bf A}\mapsto \R$ is a continuous potential. Firstly, we show the entropy spectrum of the weighed Birkhoff averages remains the same as that of the Birkhoff averages. Then we calculate the packing spectrum of the weighed Birkhoff averages. { It turns out that we can have two cases, either the packing dimension of every level set equals to its Hausdorff dimension or for every nonempty level set it is equal to the packing dimension of the whole space.}
\end{abstract}

\section{Introduction}
Let $(X, d)$ be a compact metric space and $T : X \mapsto X$ be a continuous transformation.
 Let $\phi\colon X\mapsto\R$ be a continuous potential. The Birkhoff
 average of $\phi$ is given by
 $$
 \frac1n\sum_{k=0}^{n-1}\phi(T^kx).
 $$
 The Birkhoff Ergodic Theorem asserts that the Birkhoff averages almost surely converge to a constant, namely, the integral of $\phi$, with respect to any ergodic measure, for an integrable potential $\phi$. For a dynamical system having a large family of ergodic measures (for example, a full shift), one may expect that the limit of the Birkhoff averages can take a wide variety of values. Then one naturally wonders how large the size (for example, { topological entropy,} Hausdorff- and Tricot dimension) of the level sets of the limit of the Birkhoff averages is. In other words, for an $\alpha\in\R$ in the set
 $$
 L(\phi):=\left\{\alpha\in\R:\text{ there exists }x\in X\text{ such that }\lim_{n\to\infty}\frac1n\sum_{k=0}^{n-1}\phi(T^kx)=\alpha\right\}.
 $$
 one wants to know how to describe the size of
$$
E_\phi(\alpha):=\left\{x\in X:\lim_{n\to\infty}\frac1n\sum_{k=0}^{n-1}\phi(T^kx)=\alpha\right\}.
$$
It leads to the multifractal analysis and there has been a considerable amount of works on this. As far as we know, the first work is due to Besicovitch \cite{B1935} where he studied the Hausdorff dimension of sets given by the frequency of digits in dyadic expansions. Then it was
subsequently extended by Eggleston \cite{E1949}. For further results on digit frequencies, see Barreira, Saussol and Schmeling \cite{BSS}. For multifractal analysis of Birkhoff averages, we refer to \cite{O1998,FLW2002,S1999} and references therein.

Let $\{s_n\}_{n\in\N}$ be a monotone decreasing sequence of positive reals such that the series $S_n:=\sum_{k=0}^{n-1}s_k$ diverges, that is, $\lim_{n\to\infty}S_n=\infty$. In this paper, we are interested in the weighted Birkhoff average
$$
\lim_{n\to\infty} \frac{1}{S_n}\sum_{k=0}^{n-1}s_k\phi(T^kx).
$$
When $s_n=1$ for all $n$, it is the Birkhoff average of $\phi$. When $s_n=\frac{1}{n+1}$, it becomes $\frac{1}{\log n}\sum_{k=0}^{n-1}\frac{\phi(T^kx)}{k+1}$ which is sometimes called logarithmic Birkhoff average (because $\sum_{k=0}^{n-1}\frac{1}{k+1} \sim \log n$ when $n$ is large enough). The study of the logarithmic Birkhoff average has a link to logarithmic Sarnak conjecture and some problems in number theory. See more details in \cite{T2017} and references therein.

{ Let us note here that there also exists a different object that can be called a weighted Birkhoff average:
$$
\lim_{n\to\infty} \frac{1}{n}\sum_{k=0}^{n-1}s_k\phi(T^kx),
$$
that is the same kind of sums is considered with a different normalization. Those kinds of generalized Birkhoff averages were studied lately in \cite{F} and \cite{BRS}.
 }

By summation by parts, it is not hard to check (see also Lemma \ref{lem:trivialbound}) that the convergence of the Birkhoff average $\frac1n\sum_{k=0}^{n-1}\phi(T^kx)$ implies the convergence of the weighted Birkhoff average $\frac{1}{S_n}\sum_{k=0}^{n-1}s_k\phi(T^kx)$ and the limits are the same. The converse is not true in general, but it will turn out to be true under some condition. 

For $\alpha\in\R$ let
$$
E_\phi^s(\alpha):=\left\{x\in X:\lim_{n\to\infty}\frac{1}{S_n}\sum_{k=0}^{n-1}s_k\phi(T^kx)=\alpha\right\},
$$
In this paper, we investigate Hausdorff dimension and Tricot dimension of the level sets $E_\phi^s$ of the limit of the weighted Birkhoff averages. { In the literature, the Tricot dimension  is commonly called the packing dimension. However, since it was introduced by Claude Tricot \cite{T1982} we think that it is more appropriate appellation. Throughout the paper let us denote the Hausdorff- or covering dimension by $\dim_{\rm H}$, the Tricot- or packing dimension by $\dim_{\rm P}$ and the topological entropy by $h_{\rm top}$. For the definitions and basic properties, we direct the reader to \cite{PU}.}

\subsection{Main results}

A sequence $\{s_n\}_{n\in \N}$ of positive numbers is said to be {\it nonsummable} if $S_n=\sum_{k=0}^{n-1} s_{k} \to \infty$ as $n\to \infty$. For a decreasing nonsummable sequence $\{s_n\}_{n\in \N}$ of positive numbers,  we say that it has {\it bounded asymptotic ratio} if
\begin{equation}\label{eq:bar}
\limsup_{n\to \infty}\frac{S_n}{ns_{n}}<\infty.
\end{equation} Otherwise, it is said to have {\it unbounded asymptotic ratio}.

Notice that if $\lim_{n\to \infty}s_n=\alpha>0$, then it always has bounded asymptotic ratio.


\begin{thm}\label{thm:average}
	If $\{s_n\}_{n\in \N}$ are positive, nonincreasing, and nonsummable weights, and $\{a_n\}_{n\in \N}$ is any bounded sequence of reals, then the following is true:
	\begin{enumerate}[{\bf (i)}]
		\item\label{it:thmaver1} if $\limsup_{n\to \infty} \frac {S_{n}} {ns_n} < \infty$ then the sequence $\frac{1}{n} \sum_{k=0}^{n-1} a_k$ is convergent if and only if the sequence $\frac 1 {S_n} \sum_{k=0}^{n-1} s_k a_k$ is as well, and the limits are equal;
		\item\label{it:thmaver2} if $\limsup_{n\to \infty} \frac {S_{n}} {ns_n} = \infty$ then the convergence of the sequence $\frac{1}{n}\sum_{k=0}^{n-1} a_k$ implies convergence of the sequence $\frac 1 {S_n} \sum_{k=0}^{n-1} s_k a_k$ to the same limit, but the opposite is not true. For any such $\{s_n\}_{n\in \N}$ one can find $\{a_n\}_{n\in \N}$ such that the latter sequence is convergent and the former is divergent.
	\end{enumerate}
\end{thm}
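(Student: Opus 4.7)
The proof rests on two applications of Abel summation-by-parts combined with the Silverman--Toeplitz theorem, together with one explicit counterexample. Write $A_n = \sum_{k=0}^{n-1} a_k$ and $B_n = \sum_{k=0}^{n-1} s_k a_k$. Summing by parts in the two natural directions, first on $a_k = A_{k+1}-A_k$ and then on $a_k = (B_{k+1}-B_k)/s_k$, produces
\begin{align*}
B_n &= s_{n-1} A_n + \sum_{k=0}^{n-2}(s_k-s_{k+1})\, A_{k+1},\\
A_n &= \frac{B_n}{s_{n-1}} - \sum_{k=0}^{n-2}\left(\frac{1}{s_{k+1}}-\frac{1}{s_k}\right) B_{k+1}.
\end{align*}
Dividing the first identity by $S_n$ presents $B_n/S_n$ as a nonnegative weighted average of $A_n/n$ and the $A_{k+1}/(k+1)$; a short telescoping of $(k+1)(s_k-s_{k+1})$ shows the weights really sum to $1$. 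Dividing the second identity by $n$ presents $A_n/n$ as a signed combination of $B_n/S_n$ and the $B_{k+1}/S_{k+1}$ whose signed weights still sum to $1$ and whose absolute weights sum to $2S_n/(n s_{n-1})-1$.

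The ``Ces\`aro implies weighted'' half of both (\ref{it:thmaver1}) and (\ref{it:thmaver2}) is then immediate. The first display presents $B_n/S_n$ as a convex combination, and the weight attached to any fixed $A_{k+1}/(k+1)$ tends to $0$ because $S_n\to\infty$; the Silverman--Toeplitz theorem forces $B_n/S_n \to \alpha$ whenever $A_n/n\to \alpha$, with no control on $S_{n+1}/((n+1)s_n)$ needed. The converse direction of (\ref{it:thmaver1}) uses the second display: the bounded-ratio hypothesis $\limsup S_{n+1}/((n+1)s_n)<\infty$ makes the total absolute weight uniformly bounded, while the individual weight of each fixed $B_{k+1}/S_{k+1}$ still tends to $0$; Silverman--Toeplitz then yields $A_n/n\to\alpha$ from $B_n/S_n\to\alpha$.

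The substantive part is the counterexample in (\ref{it:thmaver2}). Given $\limsup S_{n+1}/((n+1)s_n) = \infty$, I will extract a very sparse subsequence $n_j\to\infty$ along which simultaneously $r_j := S_{n_j}/(n_j s_{n_j-1}) \geq 4^j$, $S_{n_{j+1}} \geq 2 S_{n_j}$ and $n_{j+1} \geq 4 n_j$. All three can be imposed in turn because the first ratio is unbounded by hypothesis, $S_n \to \infty$, and $n_j$ can be taken arbitrarily large at each step. Then define $a_k = (-1)^j$ on the block $[n_j,2n_j)$ and $a_k = 0$ elsewhere, which is bounded. The geometric spacing makes the current block dominate, so $|A_{2n_j}| \geq n_j - \sum_{i<j}n_i \geq 2n_j/3$ with sign $(-1)^j$, whence $\limsup A_n/n \geq 1/3$ and $\liminf A_n/n \leq -1/3$, so $A_n/n$ diverges. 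Meanwhile, monotonicity of $s$ gives $T_i := S_{2n_i}-S_{n_i} \leq n_i s_{n_i-1} = S_{n_i}/r_i$, and the bounds $r_i \geq 4^i$ together with $S_{n_i} \leq 2^{i-j} S_{n_j}$ combine to $\sum_{i\leq j} T_i \leq S_{n_j}/2^{j-1}$. Since $B_n$ changes by at most $T_j$ inside block $j$ and is constant between blocks, this forces $B_n/S_n \to 0$.

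The hard step is the last one: the subsequence $n_j$ must simultaneously blow up the asymptotic ratio, grow $S_{n_j}$ fast enough to absorb the alternating $\pm T_j$ contributions to $B$, and be sparse enough that the sign reversals persist in $A_n/n$. These requirements are compatible precisely because the unbounded-ratio hypothesis supplies arbitrarily large $r_n$ along some subsequence, leaving room to enforce the growth and separation conditions afterwards.
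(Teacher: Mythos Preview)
Your proof is correct. The convergence implications in (\ref{it:thmaver1}) and the forward direction of (\ref{it:thmaver2}) use the same two Abel identities as the paper (its Lemmas~\ref{lem:trivialbound} and~\ref{lem:BAR}); you package the limit-passage as an appeal to Silverman--Toeplitz, whereas the paper verifies the same $\varepsilon$-$N$ estimates by hand, but the content is identical.

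The counterexample for (\ref{it:thmaver2}) is where the two arguments diverge. The paper (Lemma~\ref{lem:UBAR}) builds blocks $[n_k,m_k)$ with the delicate pair of conditions $S_{m_k}/S_{n_k}\to 1$ and $m_k/n_k\to\infty$, obtained by taking $m_k$ so that $S_{m_k}-S_{n_k}\approx S_{n_k}M_k^{-1/2}$ where $M_k=S_{n_k}/(n_k s_{n_k-1})$; it then alternates $\pm 1$ on these blocks. You instead fix the block shape as $[n_j,2n_j)$ and impose geometric growth $r_j\geq 4^j$, $S_{n_{j+1}}\geq 2S_{n_j}$, $n_{j+1}\geq 4n_j$ directly; this makes the weighted block masses $T_j\leq S_{n_j}/r_j$ geometrically summable against $S_{n_j}$, while the unweighted block lengths $n_j$ still dominate the partial sums $A_{2n_j}$. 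Your route is shorter and avoids the square-root trick. The paper's more elaborate Lemma~\ref{lem:UBAR} is not wasted effort, however: it yields the additional conclusions $k/S_{n_k}\to 0$ and $\sum_{\ell\leq k}(S_{m_\ell}-S_{n_\ell})/S_{m_k}\to 0$, which are exactly what is needed later in the proof of Theorem~\ref{thm:packing} on the packing spectrum. So your construction suffices for Theorem~\ref{thm:average} in isolation, while the paper's is tuned for reuse.
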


This theorem has a simple corollary for the weighted multifractal spectra.

\begin{cor}\label{cor:bounded}
	Let $(X, d)$ be a compact metric space and $T : X \mapsto X$ be a continuous transformation. Then for every continuous potential $\phi\colon X\mapsto\R$ and for every monotone decreasing nonsummable sequence $\{s_n\}_{n\in\N}$ with bounded asymptotic ratio, we have for every $\alpha\in\R$
	$$
	E_\phi(\alpha)=E_\phi^s(\alpha).
	$$
	In particular, $h_{\rm top}(E_\phi(\alpha))=h_{\rm top}(E_\phi^s(\alpha))$, { $\dim_{\rm H}E_\phi(\alpha)=\dim_{\rm H}E_\phi^s(\alpha)$} and $\dim_{\rm P}E_\phi(\alpha)=\dim_{\rm P}E_\phi^s(\alpha)$.
\end{cor}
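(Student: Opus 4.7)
The plan is to derive the corollary as an almost immediate pointwise consequence of Theorem \ref{thm:average}\eqref{it:thmaver1}. The key observation is that the bounded asymptotic ratio hypothesis on $\{s_n\}$ is exactly the hypothesis $\limsup_{n\to\infty} S_{n+1}/((n+1)s_n) < \infty$ required by part \eqref{it:thmaver1} of the theorem (up to reindexing by one, which is harmless because $s_n/s_{n-1}$ is bounded away from $0$ and $\infty$ on any nonincreasing nonsummable sequence on the relevant scale — actually $S_{n+1}/((n+1)s_n) \asymp S_n/(ns_{n-1})$ by trivial comparison since $s_n \leq s_{n-1}$ and $S_{n+1} = S_n + s_n$).

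First I would fix $x \in X$ and set $a_k := \phi(T^k x)$. Since $\phi$ is continuous and $X$ is compact, $\phi$ is bounded, so $\{a_k\}_{k\in\N}$ is a bounded sequence of reals, meeting the hypothesis of Theorem \ref{thm:average}. Applying part \eqref{it:thmaver1} of the theorem yields the equivalence
\[
\lim_{n\to\infty} \frac 1 n \sum_{k=0}^{n-1} \phi(T^k x) = \alpha \quad\Longleftrightarrow\quad \lim_{n\to\infty} \frac 1 {S_n} \sum_{k=0}^{n-1} s_k \phi(T^k x) = \alpha,
\]
with the limits being equal whenever either exists. This is precisely the statement that $x \in E_\phi(\alpha)$ if and only if $x \in E_\phi^s(\alpha)$. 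Since $x$ was arbitrary, we obtain the set-theoretic equality $E_\phi(\alpha) = E_\phi^s(\alpha)$.

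Finally, once the two level sets are shown to coincide as subsets of $X$, the equalities $h_{\mathrm{top}}(E_\phi(\alpha)) = h_{\mathrm{top}}(E_\phi^s(\alpha))$ and $\dim_{\mathrm P} E_\phi(\alpha) = \dim_{\mathrm P} E_\phi^s(\alpha)$ are automatic, because topological entropy (in the Bowen sense, applicable to noncompact subsets) and packing dimension are invariants of the underlying set. There is essentially no obstacle in this argument; the entire content of the corollary is already packaged into Theorem \ref{thm:average}\eqref{it:thmaver1}, and the only mild care required is to justify the harmless reindexing between $S_n/(ns_{n-1})$ and $S_{n+1}/((n+1)s_n)$ so that the bounded asymptotic ratio hypothesis matches the hypothesis of the theorem verbatim.
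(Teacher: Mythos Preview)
Your proof is correct and matches the paper's approach exactly: the paper presents this corollary as an immediate consequence of Theorem~\ref{thm:average}\eqref{it:thmaver1} without even writing out a separate argument. Your only added care---the reindexing remark---is in fact trivial, since the substitution $m=n-1$ shows the two $\limsup$ conditions are literally the same expression.
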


Let $\mathcal{A}=\{1,\ldots,K\}$ be a finite alphabet, and let $\Sigma=\mathcal{A}^\N$ be the symbolic space. Let $\sigma$ be the left-shift operator on $\Sigma$. Now we are concentrated on   $\Sigma_{\mathbf{A}}\subseteq\Sigma$ an aperiodic and irreducible subshift of finite type.

Our first result on weighted Birkhoff average is concerned about the entropy spectrum. Namely, regardless of the boundedness or unboundedness of the asymptotic ratio, we show that the entropy spectrum of the weighed Birkhoff averages remains the same as that of the Birkhoff averages on aperiodic and irreducible subshifts of finite type.

\begin{thm}\label{thm:hausdorff}
	For every continuous potential $\phi\colon\Sigma_{\bf A}\mapsto\R$ and for every monotone decreasing sequence $\{s_n\}_{n\in\N}$ of positive reals such that $\lim_{n\to\infty}s_n=0$ and $\lim_{n\to\infty}S_n=\infty$ and every $\alpha\in\R$, we have
	$$
	h_{\rm top}(E_\phi(\alpha))=h_{\rm top}(E_\phi^s(\alpha)).
	$$
\end{thm}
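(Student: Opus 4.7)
The lower bound is immediate. By Theorem~\ref{thm:average}~\ref{it:thmaver2}, convergence of the Cesaro average of $\phi\circ\sigma^k$ to $\alpha$ implies convergence of its weighted average to the same limit, so $E_\phi(\alpha)\subseteq E_\phi^s(\alpha)$ and the monotonicity of Bowen topological entropy gives $h_{\rm top}(E_\phi(\alpha))\le h_{\rm top}(E_\phi^s(\alpha))$.

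For the reverse inequality, my first step is to observe that the weighted empirical measures
$$
\mu_n^s(x):=\frac{1}{S_n}\sum_{k=0}^{n-1}s_k\,\delta_{\sigma^k x}
$$
are \emph{asymptotically} $\sigma$-invariant. A direct computation using that $\{s_n\}$ is decreasing yields
$$
\bigl\|\sigma_*\mu_n^s(x)-\mu_n^s(x)\bigr\|_{\rm TV}\le\frac{2s_0}{S_n}\xrightarrow[n\to\infty]{}0.
$$
Consequently every weak-$*$ accumulation point of $\{\mu_n^s(x)\}$ is a $\sigma$-invariant probability measure on $\Sigma_{\bf A}$, and for $x\in E_\phi^s(\alpha)$ the continuity of $\phi$ forces such an accumulation point $\nu$ to satisfy $\int\phi\,d\nu=\alpha$.

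The remaining step is the Pfister--Sullivan type upper bound
$$
h_{\rm top}(E_\phi^s(\alpha))\le\sup\bigl\{h_\mu(\sigma):\mu\text{ is }\sigma\text{-invariant and }\int\phi\,d\mu=\alpha\bigr\},
$$
since the right-hand side equals $h_{\rm top}(E_\phi(\alpha))$ by the classical multifractal formalism on aperiodic irreducible subshifts of finite type (Olsen, Fan--Feng--Wu, Takens--Verbitskiy). This is the main obstacle: the standard Pfister--Sullivan argument counts length-$n$ cylinders on which the Birkhoff sum of $\phi$ is close to $n\alpha$, whereas here the constraint is on the \emph{weighted} sum $\sum s_k\phi(\sigma^k\cdot)$. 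I would handle the discrepancy through the Abel summation identity
$$
\frac{1}{S_n}\sum_{k=0}^{n-1}s_k\phi(\sigma^k x)=\sum_{k=1}^{n}\frac{\tilde s_k}{S_n}\cdot\frac{1}{k}\sum_{j=0}^{k-1}\phi(\sigma^j x),
$$
with $\tilde s_k:=k(s_{k-1}-s_k)$ for $1\le k<n$, $\tilde s_n:=ns_{n-1}$, and $\sum_{k=1}^n\tilde s_k=S_n$, which writes the weighted average as a probability-weighted mixture of genuine Birkhoff averages at all scales $k\le n$. Cutting the range of $k$ into dyadic scales and invoking the uniform specification of $\Sigma_{\bf A}$, the cylinder count at each scale reduces to a classical pressure estimate that can be reassembled via the convexity of the Legendre transform.
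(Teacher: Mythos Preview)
Your lower bound matches the paper's (both rest on Lemma~\ref{lem:trivialbound}). The upper bound is where you diverge, and there the argument has a genuine gap.

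The asymptotic-invariance claim for $\mu_n^s(x)$ is correct, and any weak-$*$ limit indeed lies in $\{\nu:\int\phi\,d\nu=\alpha\}$. But the Pfister--Sullivan upper bound you want is proved for the \emph{unweighted} empirical measures: its covering argument counts $n$-cylinders on which $\frac{1}{n}\sum_{k<n}\delta_{\sigma^k x}$ is close to a given $\nu$, and the count $e^{n h_\nu+o(n)}$ comes from Shannon--McMillan--Breiman applied to the ordinary Birkhoff statistic. For the weighted measure $\mu_n^s(x)$, closeness to $\nu$ constrains only the coordinates carrying most of the mass of $(s_k/S_n)_{k<n}$; when that mass concentrates on $k\ll n$ (e.g.\ $s_k=1/(k{+}1)$), the constraint leaves almost all of $[x|_n]$ free and the naive count at scale $n$ is $e^{n h_{\rm top}(\Sigma_{\bf A})+o(n)}$, not $e^{nH_\phi(\alpha)+o(n)}$. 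Your proposed remedy---dyadic scales reassembled via convexity of the Legendre transform---is a plan, not a proof; carrying it out requires a genuinely multi-scale covering that you do not supply.

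The paper sidesteps this obstacle, and in fact your own Abel identity already contains the shortcut. It expresses the weighted average as a convex combination of the Ces\`aro averages $\frac{1}{k}\sum_{j<k}\phi(\sigma^j x)$ over $k\le n$, hence the weighted average lies between their infimum and supremum; letting $n\to\infty$ gives
\[
\chi_\phi^-(x)\le\alpha\le\chi_\phi^+(x)\qquad\text{for every }x\in E_\phi^s(\alpha),
\]
which is precisely Lemma~\ref{lem:trivialbound}. Thus $E_\phi^s(\alpha)\subseteq\{\chi_\phi^-\le\alpha\}\cap\{\chi_\phi^+\ge\alpha\}$, and the paper then quotes its Theorem~\ref{thm:continuous potential}---the entropy formula for the \emph{unweighted} half-level sets $\{\chi_\phi^\pm\gtrless t\}$---together with the concavity of $H_\phi$ to obtain
\[
h_{\rm top}(E_\phi^s(\alpha))\le\min\Bigl\{\sup_{s\ge\alpha}H_\phi(s),\ \sup_{s\le\alpha}H_\phi(s)\Bigr\}=H_\phi(\alpha)=h_{\rm top}(E_\phi(\alpha)).
\]
The weights disappear at the sandwich step, and no weighted version of Pfister--Sullivan is ever needed.
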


{ For H\"older continuous potentials and when the symbolic space is a full shift this result is actually an easy consequence of \cite[Theorem 3.3]{FST2013}, to obtain the full generality we first need a more general formulation of Fan, Schmeling, Troubetzkoy Theorem.}

{ Note that if the sequence $\{s_n\}_{n\in\N}$ has bounded asymptotic ratio then
	$$
	h_{\rm top}(E_\phi^s(\alpha))=\dim_{\rm H}E_\phi^s(\alpha)=\dim_{\rm P}E_\phi^s(\alpha).
	$$
This follows by Corollary~\ref{cor:bounded}, and the fact that $h_{\rm top}(E_\phi(\alpha))=\dim_{\rm H}E_\phi(\alpha)=\dim_{\rm P}E_\phi(\alpha)$, see Fan, Feng and Wu \cite{FFW}.}

Our second main result on weighted Birkhoff average is concerned about Tricot dimension. We show that when varying the sequence $\{s_n\}_{n\in \N}$, the change of the packing spectrum of the weighed Birkhoff averages has a "gap" in the sense that it equals to either that of the Birkhoff averages or the whole space.
\begin{thm}\label{thm:packing}
	Let $\{s_n\}_{n\in\N}$ be a monotone decreasing sequence of positive reals such that 
	`$\lim_{n\to\infty}S_n=\infty$ and $\limsup_{n\to\infty}\frac{S_{n}}{ns_{n}}=\infty$. Then for every continuous potential $\phi\colon\Sigma_{\bf A}\mapsto\R$, we have $E_\phi^s(\alpha)=\emptyset$ for all $\alpha\in\R\setminus L(\phi)$ and for every $\alpha\in L(\phi)$, we have
	$$
	\dim_{\rm P}E_\phi^s(\alpha)=h_{\rm top}(\Sigma_{\bf A}).
	$$
\end{thm}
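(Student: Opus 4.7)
I would split the theorem into two claims: (1) $E_\phi^s(\alpha)=\emptyset$ when $\alpha\notin L(\phi)$, and (2) $\dim_{\rm P}E_\phi^s(\alpha) = h_{\rm top}(\Sigma_{\bf A})$ when $\alpha\in L(\phi)$. For Part (1), the plan is to reuse the Abel/summation-by-parts identity behind Theorem~\ref{thm:average}, which writes the weighted average $W_n(x):=\frac{1}{S_n}\sum_{k=0}^{n-1}s_k\phi(\sigma^kx)$ as a convex combination of the ordinary Birkhoff averages $A_j(x):=\frac{1}{j}\sum_{k=0}^{j-1}\phi(\sigma^kx)$, with the convex weight on any fixed initial segment $\{A_1,\ldots,A_N\}$ tending to zero as $n\to\infty$. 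Every accumulation point of $(A_j(x))_j$ equals $\int\phi\,d\mu$ for some $\sigma$-invariant $\mu$ (a weak-$*$ limit of the empirical measures), hence lies in the closed interval $L(\phi)=[\min_\mu\int\phi\,d\mu,\max_\mu\int\phi\,d\mu]$ (closedness and interval structure being standard for irreducible aperiodic subshifts of finite type). A convex combination of numbers accumulating in a closed convex interval, with weights concentrating on the accumulation part, must itself have all its limits in that interval; this gives Part (1).

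For Part (2), the upper bound is trivial since $E_\phi^s(\alpha)\subseteq\Sigma_{\bf A}$. For the lower bound I would construct a subset $E\subseteq E_\phi^s(\alpha)$ with $\dim_{\rm P}E = h_{\rm top}(\Sigma_{\bf A})$ by ``grafting'' arbitrary free blocks onto a reference point. Pick $y\in\Sigma_{\bf A}$ with $A_n(y)\to\alpha$ (available because $\alpha\in L(\phi)$); Theorem~\ref{thm:average}\,(ii) then gives $W_n(y)\to\alpha$. Set $M:=\sup|\phi|$. Using the hypothesis $\limsup_{n\to\infty}\frac{S_{n+1}}{(n+1)s_n}=\infty$, I pick a very rapidly growing sequence $b_i\to\infty$ along which $\rho_i:=\frac{b_is_{b_i-1}}{S_{b_i}}$ decays super-exponentially, and let $E$ be the set of all $x\in\Sigma_{\bf A}$ that agree with $y$ outside the ``free intervals'' $I_i=[b_i,b_i+L_i)$. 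Inside each $I_i$ the symbols are arbitrary, modulo admissibility handled by reserving a bounded number of positions from the mixing window of $\Sigma_{\bf A}$ as connecting words.

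The lengths $L_i$ and the gaps $g_i:=b_{i+1}-(b_i+L_i)$ are tuned to satisfy two constraints simultaneously. \emph{Density for packing}: I arrange $\limsup_i (L_1+\cdots+L_i)/(b_i+L_i)=1$, ensuring that at cylinder scale $n_i=b_i+L_i$ the number of admissible length-$n_i$ cylinders of $E$ is $\exp(h_{\rm top}(\Sigma_{\bf A})\,n_i(1+o(1)))$; the uniform Moran measure on $E$ then has upper local dimension $\ge h_{\rm top}(\Sigma_{\bf A})$ almost everywhere, so $\dim_{\rm P}E\ge h_{\rm top}(\Sigma_{\bf A})$ by the standard mass-distribution principle for packing dimension. \emph{Convergence}: the perturbation $W_n(x)-W_n(y)=\frac{1}{S_n}\sum_ks_k(\phi(\sigma^kx)-\phi(\sigma^ky))$ must tend to zero; I split the sum by whether $k$ lies inside a free interval or not. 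The inside part is bounded by $2M\sum_iL_is_{b_i-1}/S_n$, which becomes $o(1)$ because the super-fast decay of $\rho_i$ allows $L_is_{b_i-1}/S_{b_i}$ to be summable in $i$ while $L_i$ remains enormous. The outside part uses the uniform continuity of $\phi$: for $k$ at distance $m$ from the next free interval, $|\phi(\sigma^kx)-\phi(\sigma^ky)|\le\mathrm{var}_m(\phi)$, and choosing $g_i\to\infty$ (with $\sum_{j\le i}g_j$ small compared to $\sum_{j\le i}L_j$) makes this contribution vanish.

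The main obstacle is exactly the tension between the two constraints above: full packing density demands $L_i$ almost as large as $b_{i+1}-b_i$, while convergence of $W_n$ wants the free intervals to contribute vanishing weight, and for general monotone $s$ these pull in opposite directions. The hypothesis $\limsup_{n\to\infty}\frac{S_{n+1}}{(n+1)s_n}=\infty$ is exactly what resolves the tension: along the subsequence $\{b_i\}$ it allows $L_i$ to dominate $b_i$ in position-count while $L_is_{b_i-1}$ stays small relative to $S_{b_i}$ in weight. A secondary difficulty is the outside-contribution estimate for merely continuous (not H\"older) $\phi$, which I would handle by choosing $g_i$ to grow slowly enough that $\mathrm{var}_{g_i}(\phi)$ decays at an appropriate rate while $\sum g_i$ still stays negligible compared to $\sum L_i$; this is feasible because $\mathrm{var}_m(\phi)\to0$ by uniform continuity. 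Edge cases $\alpha\in\{\min_\mu\int\phi\,d\mu,\max_\mu\int\phi\,d\mu\}$ cause no extra trouble, since a minimizing/maximizing ergodic measure still has Birkhoff-typical $y$.
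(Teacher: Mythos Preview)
Your approach is correct and essentially mirrors the paper's: both pick a reference point in $E_\phi^s(\alpha)$, carve out free intervals that dominate in position count at a subsequence of times (giving full packing dimension via the measure of maximal entropy on the free blocks) while carrying vanishing $s$-weighted mass (giving $W_n\to\alpha$), with the unbounded-ratio hypothesis being exactly what reconciles these two demands. The paper packages the interval construction into Lemma~\ref{lem:UBAR} (sequences $n_k\le m_k<n_{k+1}$ with $k/S_{n_k}\to0$, $\sum_{\ell\le k}(S_{m_\ell}-S_{n_\ell})/S_{m_k}\to0$, $n_k/m_k\to0$) and the stability of $W_n$ under free-block perturbation into Lemma~\ref{lem:maintopacking}, whereas you derive the same estimates inline with the parameterization $(b_i,L_i,g_i)$; the only condition you leave slightly implicit is the analogue of $k/S_{n_k}\to0$ needed to kill the near-boundary terms in your outside estimate, but that follows from your ``very rapidly growing'' choice of $b_i$.
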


For example, the sequence $((n+1)^{-1})_{n\ge 0}$ satisfies the condition of Theorem \ref{thm:packing}; the sequence $(s_n=(n+1)^d)_{n\ge 0}, -1<d<0,$ satisfies the condition of Corollary~\ref{cor:bounded}.





\section{Properties of weighted averages}
The following lemma is well known and follows directly by summation by parts. We present the proof here for completeness.
\begin{lem}\label{lem:trivialbound}
	Let $\{a_n\}_{n\in \mathbb{N}}$ be a bounded sequence of real numbers. Let $\{s_n\}_{n\in \mathbb{N}}$ be a decreasing sequence of positive numbers such that $S_n=\sum_{k=0}^{n-1}s_k\to\infty$ as $n\to\infty$. Then
	\begin{equation}\label{eq:log average}
	\begin{split}
	\liminf_{n\to \infty} \frac{1}{n} \sum_{k=0}^{n-1}a_k
	&\le \liminf_{n\to \infty} \frac{1}{S_n} \sum_{k=0}^{n-1}s_ka_k \\
	&\le \limsup_{n\to \infty} \frac{1}{S_n} \sum_{k=0}^{n-1}s_ka_k \le \limsup_{n\to \infty} \frac{1}{n} \sum_{k=0}^{n-1}a_k.
	\end{split}		
	\end{equation}
\end{lem}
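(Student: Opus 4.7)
The plan is to reduce the weighted averages to a convex combination of the unweighted ergodic averages $A_k/k$, where $A_k := \sum_{j=0}^{k-1} a_j$, and then exploit the fact that $S_n \to \infty$ to wash out the initial (bounded) piece of this convex combination.

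First I would carry out Abel's summation by parts, which gives the identity
$$
\sum_{k=0}^{n-1} s_k a_k \;=\; s_{n-1} A_n + \sum_{k=1}^{n-1} (s_{k-1}-s_k)\, A_k.
$$
Applying the same summation to the constant sequence $a_k\equiv 1$ (or equivalently computing directly with $A_k=k$) yields the companion identity
$$
S_n \;=\; n\, s_{n-1} + \sum_{k=1}^{n-1} k\,(s_{k-1}-s_k).
$$
Because $\{s_n\}$ is decreasing and positive, every coefficient $s_{k-1}-s_k$ is nonnegative, so dividing the first identity by the second realizes $\frac{1}{S_n}\sum_{k=0}^{n-1} s_k a_k$ as a genuine convex combination
$$
\frac{1}{S_n}\sum_{k=0}^{n-1} s_k a_k \;=\; \sum_{k=1}^{n} w_{n,k}\cdot \frac{A_k}{k},\qquad \sum_{k=1}^{n} w_{n,k}=1,
$$
with $w_{n,n}=n s_{n-1}/S_n$ and $w_{n,k}=k(s_{k-1}-s_k)/S_n$ for $1\le k\le n-1$.

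With this convex combination in hand, the right-hand inequality of \eqref{eq:log average} is easy: fix $\varepsilon>0$ and choose $N$ so that $A_k/k \le L+\varepsilon$ for every $k\ge N$, where $L=\limsup_{n\to\infty} A_n/n$ (finite because $\{a_n\}$ is bounded). Split the convex combination at $N$; the head contribution is controlled by the uniform bound $M=\sup_k |A_k/k|$ times $\sum_{k=1}^{N-1} w_{n,k}$, and each such weight is of order $O(1/S_n)$ for $N$ fixed, hence vanishes as $n\to\infty$ since $S_n\to\infty$. The tail contribution is at most $L+\varepsilon$. Letting $\varepsilon\to 0$ gives the $\limsup$ bound; the left-hand inequality follows by applying the argument to $-a_n$, and the middle inequality is trivial.

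The only genuinely nontrivial step is the Abel bookkeeping leading to the convex combination, and in particular verifying that the weights $w_{n,k}$ sum to $1$; everything else is a standard Cesàro-type argument. There is no obstacle specific to dynamics here, so the lemma is purely an identity for sequences of reals.
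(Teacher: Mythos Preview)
Your proof is correct and follows essentially the same route as the paper: Abel summation by parts to express $\sum s_k a_k$ in terms of the partial sums $A_k$, then the bound $A_k/k\le L+\varepsilon$ for $k\ge N$ together with $S_n\to\infty$ to kill the finite head. The only difference is cosmetic: you package the Abel identity explicitly as a convex combination $\sum_k w_{n,k}\,(A_k/k)$, whereas the paper substitutes the bound $A_k\le(\alpha+\varepsilon)k$ directly into the summation-by-parts formula and resimplifies; the underlying estimate is identical.
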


\begin{proof}
We only need to prove the last inequality of \eqref{eq:log average}. Indeed, the proof of the first inequality follows by considering $-a_k$ in place of $a_k$. Denote $A_n = \sum_{k=0}^{n-1} a_k$. By summation by parts, we have that
	\begin{equation*}
		\sum_{k=0}^{n-1}s_ka_k= \sum_{k=0}^{n-2} (s_{k}-s_{k+1})A_{k+1} + s_{n-1}A_n.
	\end{equation*}
Let $\alpha:=\limsup_{n\to \infty} \frac{1}{n} A_n$. For any $\varepsilon>0$ let $N$ be the integer such that $A_n\leq(\alpha+\varepsilon)n$ for every $n\geq N$. Thus,
	\[
	\begin{split}
\sum_{k=0}^{n-1}s_ka_k	&\leq C(N) +(\alpha+\varepsilon) \left(\sum_{k=0}^{n-2} k(s_{k}-s_{k+1}) + s_{n-1}n \right)\\
	&\leq C(N)+(\alpha+\varepsilon)S_n,
\end{split}
\]
where $C(N)$ is a constant depending on $N$ but not on $n\ge N$.
Since $S_n\to\infty$ as $n\to \infty$ and $\varepsilon>0$ was arbitrary we get the last inequality.
\end{proof}

The next lemma shows that if $s_n$ has bounded asymptotic ratio then the lemma above can be reversed.

\begin{lem}\label{lem:BAR}
	Let $\{a_n\}_{n\in \mathbb{N}}$ be a bounded sequence of real numbers. Let $\{s_n\}_{n\in \mathbb{N}}$ be a decreasing summable sequence of positive numbers having bounded asymptotic ratio $G=\limsup_{n\to \infty}\frac{S_{n+1}}{(n+1)s_{n}} (\geq1)$. Then
	$$
	\limsup_{n\to\infty}\frac{1}{n}\sum_{k=0}^{n-1}a_k\leq G\limsup_{n\to\infty}\frac{1}{S_n}\sum_{k=0}^{n-1}s_ka_k+(1-G)\liminf_{n\to\infty}\frac{1}{S_n}\sum_{k=0}^{n-1}s_ka_k,
	$$
	and,
	$$
	\liminf_{n\to\infty}\frac{1}{n}\sum_{k=0}^{n-1}a_k\geq G\liminf_{n\to\infty}\frac{1}{S_n}\sum_{k=0}^{n-1}s_ka_k+(1-G)\limsup_{n\to\infty}\frac{1}{S_n}\sum_{k=0}^{n-1}s_ka_k.
	$$
\end{lem}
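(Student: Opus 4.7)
The idea is to reverse the summation-by-parts computation used in Lemma~\ref{lem:trivialbound}: there one expands $\sum s_k a_k$ in terms of $A_k=\sum_{\ell<k}a_\ell$; here I will expand $A_n$ in terms of the weighted partial sums $B_k=\sum_{\ell<k}s_\ell a_\ell$. Writing $a_k=s_k^{-1}(B_{k+1}-B_k)$ and applying Abel summation (using $B_0=0$) gives
\[
A_n \;=\; \frac{B_n}{s_{n-1}} + \sum_{k=1}^{n-1} B_k\Bigl(\frac{1}{s_{k-1}}-\frac{1}{s_k}\Bigr),
\]
where every coefficient in the sum is nonpositive because $\{s_k\}$ is decreasing, while the leading coefficient $1/s_{n-1}$ is positive.

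Next, specialize the identity to the sequence $a_k\equiv 1$ (so $B_k=S_k$ and $A_n=n$) to obtain the benchmark
\[
n \;=\; \frac{S_n}{s_{n-1}} + \sum_{k=1}^{n-1} S_k\Bigl(\frac{1}{s_{k-1}}-\frac{1}{s_k}\Bigr).
\]
This is what will allow us to replace the awkward sum of negative-coefficient terms by a clean expression involving $n$ and $S_n/s_{n-1}$.

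Set $\overline b=\limsup_n B_n/S_n$, $\underline b=\liminf_n B_n/S_n$. Fix $\varepsilon>0$ and choose $N$ so large that $S_k(\underline b-\varepsilon)\le B_k\le S_k(\overline b+\varepsilon)$ for every $k\ge N$. For the first displayed inequality, bound the positive leading term by $B_n\le S_n(\overline b+\varepsilon)$ and, in the summation, bound $B_k$ from below (the coefficient is negative) by $S_k(\underline b-\varepsilon)$; the finitely many terms $k<N$ contribute a constant $C(N)$. Substituting the benchmark identity to evaluate $\sum_{k=1}^{n-1}S_k(1/s_{k-1}-1/s_k)=n-S_n/s_{n-1}$ and grouping produces
\[
A_n \;\le\; \frac{S_n}{s_{n-1}}\bigl(\overline b-\underline b+2\varepsilon\bigr) + (\underline b-\varepsilon)\,n + O_N(1).
\]
Divide by $n$, use the hypothesis $S_n/(n s_{n-1})\le G$ together with $\overline b-\underline b+2\varepsilon\ge 0$ (so the direction of the inequality is preserved), let $n\to\infty$ and then $\varepsilon\to 0$. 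This yields the first stated inequality. The second follows by the symmetric argument: bound the leading term from below using $\underline b$ and the summation terms from above using $\overline b$.

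\textbf{Main obstacle.} There is no deep difficulty; the content of the lemma is essentially algebraic. The one point one has to get right is to recognize that summation by parts must be applied in the opposite direction from Lemma~\ref{lem:trivialbound}, and to use the $a_k\equiv 1$ specialization as a cancellation device so that the bound $S_n/(ns_{n-1})\le G$ can be inserted with the correct sign. The elementary bound $S_n\ge n s_{n-1}$ (so that $G\ge 1$ automatically and the coefficients $G$ and $1-G$ have definite signs) is what makes the combination $G\overline b+(1-G)\underline b$ actually dominate.
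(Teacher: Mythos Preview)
Your proof is correct and is essentially the same argument as in the paper: the paper also writes $\sum_{k<n}a_k=\sum_{k<n}s_k^{-1}(s_ka_k)$, applies Abel summation to produce the leading term $s_{n-1}^{-1}B_n$ plus a sum with nonpositive coefficients $s_k^{-1}-s_{k+1}^{-1}$, bounds the former by $(\beta+\varepsilon)S_n$ and the latter by $(\alpha-\varepsilon)S_{k+1}$, and then simplifies (implicitly using exactly your ``benchmark'' identity for $a_k\equiv 1$) to $n\alpha+\varepsilon(2s_{n-1}^{-1}S_n-n)+(\beta-\alpha)s_{n-1}^{-1}S_n$ before dividing by $n$ and invoking $S_n/(ns_{n-1})\le G$. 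Your explicit isolation of the $a_k\equiv1$ identity makes the computation cleaner, but the route is the same.
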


\begin{proof}
	We will show only the first inequality, the proof of the second is again obtained by replacing $(a_k)_k$ with $(-a_k)_k$. Denote
	$$
	\alpha:=\liminf_{n\to\infty}\frac{1}{S_n}\sum_{k=0}^{n-1}s_ka_k\text{ and }\beta:=\limsup_{n\to\infty}\frac{1}{S_n}\sum_{k=0}^{n-1}s_ka_k.
	$$
	Denote also $B_n = \sum_{k=0}^{n-1} s_k a_k$.
	For every $\varepsilon>0$ there exists $N\geq1$ such that for every $n\geq N$
	$$
	\alpha-\varepsilon<\frac{B_n}{S_n}<\beta+\varepsilon.
	$$
	By summation by parts, we have
	\[
	\sum_{k=0}^{n-1}a_k=\sum_{k=0}^{n-1}s_k^{-1}s_ka_k=\sum_{k=0}^{n-2}(s_k^{-1}-s_{k+1}^{-1})B_{k+1}+s_{n-1}^{-1}B_n.
	\]
	Since $s_n$ is monotone decreasing  for every $n\geq N$, we get
	\[
	\sum_{k=0}^{n-1}a_k\leq C(N)+\sum_{k=0}^{n-2}(\alpha-\varepsilon)S_{k+1}(s_k^{-1}-s_{k+1}^{-1})+s_{n-1}^{-1}(\beta+\varepsilon)S_n,
	\]
	where $C(N)$ is a constant depending on $N$ but not on $n\ge N$. 
	 Hence,
	\[
	\begin{split}
		\sum_{k=0}^{n-1}a_k&\leq C(N)+\sum_{k=0}^{n-2}(\alpha-\varepsilon)S_{k+1}(s_k^{-1}-s_{k+1}^{-1})+s_{n-1}^{-1}(\beta+\varepsilon)S_n\\
		&=C(N)+n\alpha+\varepsilon\left(2s_{n-1}^{-1}S_{n}-n\right)+(\beta-\alpha)s_{n-1}^{-1}S_n.
	\end{split}
	\]
	Since $\varepsilon>0$ was arbitrary, the claim follows by the definition of $G$.
\end{proof}

{ Note that the the quantity given in the definition of bounded asymptotic ratio \eqref{eq:bar} equals to $\limsup_{n\to\infty}\frac{S_{n+1}}{(n+1)s_{n}}$ by simple algebraic manipulations because $\lim_{n\to\infty}S_{n+1}/S_n=1$.}

The following is a simple consequence of Lemma~\ref{lem:BAR}.

\begin{cor}\label{cor:BAR}
	Let $\{a_n\}_{n\in \mathbb{N}}$ be a bounded sequence of real numbers. Let $\{s_n\}_{n\in \mathbb{N}}$ be a decreasing summable sequence of positive numbers having bounded asymptotic ratio. Then
	$$
	\lim_{n\to\infty}\frac{1}{S_n}\sum_{k=0}^{n-1}s_ka_k=\alpha\text{ if and only if }\lim_{n\to\infty}\frac{1}{n}\sum_{k=0}^{n-1}a_k=\alpha.
	$$
\end{cor}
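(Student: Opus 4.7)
The plan is to simply apply Lemma~\ref{lem:BAR} to the hypothesis. Since $\{a_n\}_{n\in\N}$ is bounded, the quantities
$$A^+:=\limsup_{n\to\infty}\frac{1}{n}\sum_{k=0}^{n-1}a_k\quad\text{and}\quad A^-:=\liminf_{n\to\infty}\frac{1}{n}\sum_{k=0}^{n-1}a_k$$
are finite, as are the corresponding limsup and liminf of the weighted averages. By assumption, both the liminf and the limsup of $\frac{1}{S_n}\sum_{k=0}^{n-1}s_k a_k$ coincide and equal $\alpha$.

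First I would substitute this common value $\alpha$ into the first inequality of Lemma~\ref{lem:BAR}. This yields
$$A^+\leq G\alpha+(1-G)\alpha=\alpha.$$
Next I would plug the same value into the second inequality of Lemma~\ref{lem:BAR}, which gives
$$A^-\geq G\alpha+(1-G)\alpha=\alpha.$$
Combining these with the trivial inequality $A^-\leq A^+$, all three quantities must equal $\alpha$, so the Birkhoff average converges to $\alpha$ as desired.

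There is essentially no obstacle here: all the real work is already packaged inside Lemma~\ref{lem:BAR}, and the corollary is a one-line substitution. The only thing to remark is that the cancellation of the factor $G$ is what makes the statement clean, so the bounded asymptotic ratio hypothesis enters only through the applicability of Lemma~\ref{lem:BAR} and not through the arithmetic of the final estimate.
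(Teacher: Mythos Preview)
Your argument is correct and matches the paper's intended approach: the corollary is stated there as ``a simple consequence of Lemma~\ref{lem:BAR}'', and substituting the common limit $\alpha$ into both inequalities of that lemma is exactly the right (and only reasonable) way to read that remark.
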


Clearly, Theorem~\ref{thm:average}\eqref{it:thmaver1} follows immediately by Lemma~\ref{lem:trivialbound} and Corollary~\ref{cor:BAR}. To show Theorem~\ref{thm:average}\eqref{it:thmaver2}, we need the following lemma, which will be used later also in the proof of Theorem~\ref{thm:packing}.

\begin{lem}\label{lem:UBAR}
	Let $\{s_n\}_{n\in \mathbb{N}}$ be a decreasing nonsummable sequence of positive numbers having unbounded asymptotic ratio. Then there exist sequences $\{n_k\}_{k\in\N}$ and $\{m_k\}_{k\in\N}$ satisfying that $n_k\leq m_k< n_{k+1}$ and
	\begin{equation}\label{eq:mainneed}
	\lim_{k\to\infty}\frac{k}{S_{n_k}}=\lim_{k\to\infty}\frac{\sum\limits_{\ell=1}^k(S_{m_\ell}-S_{n_\ell})}{S_{m_k}}=\lim_{k\to\infty}\frac{n_k}{m_k}=0.
	\end{equation}
\end{lem}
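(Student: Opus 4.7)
The plan is to exploit the unbounded asymptotic ratio to locate indices where $s_n$ is tiny compared to the running average $S_n/n$. At such an index $n_k$, extending to a large multiple $m_k\approx kn_k$ contributes negligibly to $S$, which reconciles the apparent tension between the last two conditions of \eqref{eq:mainneed}: they respectively ask for $S_{m_k}\approx S_{n_k}$ and $m_k\gg n_k$.

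Specifically, the hypothesis provides arbitrarily large $n$ with $\frac{S_{n+1}}{(n+1)s_n}$ arbitrarily large, and because $S_n\to\infty$ these two largeness conditions can be imposed simultaneously. I would build the sequences by induction. Set $T_0:=0$; having produced $n_1\le m_1<\cdots<n_{k-1}\le m_{k-1}$, let $T_{k-1}:=\sum_{\ell=1}^{k-1}(S_{m_\ell}-S_{n_\ell})$ and pick $n_k>m_{k-1}$ with
$$S_{n_k}\ge k(T_{k-1}+k)\qquad\text{and}\qquad\frac{S_{n_k+1}}{(n_k+1)s_{n_k}}\ge k^3.$$
Then set $m_k:=kn_k$; these choices automatically yield $n_k/m_k=1/k\to 0$ and $k/S_{n_k}\le 1/k\to 0$, delivering the first and third limits in \eqref{eq:mainneed}.

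For the middle limit, the key estimate uses the monotonicity of $\{s_n\}$ together with the asymptotic-ratio bound:
$$S_{m_k}-S_{n_k}\le(m_k-n_k)s_{n_k}\le kn_k\cdot\frac{S_{n_k+1}}{(n_k+1)k^3}\le\frac{S_{n_k+1}}{k^2}.$$
Because the same bound forces $s_{n_k}/S_{n_k}\to 0$, one has $S_{n_k+1}/S_{n_k}\to 1$, so $S_{m_k}-S_{n_k}=o(S_{n_k})$ and therefore $S_{m_k}/S_{n_k}\to 1$. Combined with the choice $S_{n_k}\ge kT_{k-1}$ (which gives $T_{k-1}/S_{m_k}\to 0$), the numerator $T_k=T_{k-1}+(S_{m_k}-S_{n_k})$ is $o(S_{m_k})$, which is the middle limit.

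The only point that requires real care is the coupled choice of $n_k$: it must lie both far enough along to dwarf the accumulated debt $T_{k-1}$ and on a spot where the asymptotic ratio is enormous. The unbounded-asymptotic-ratio hypothesis is precisely the slack that makes these two demands simultaneously satisfiable, so no further obstacle is expected.
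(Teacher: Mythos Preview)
Your argument is correct. The inductive choice of $n_k$ is legitimate: since $\limsup_n \frac{S_{n+1}}{(n+1)s_n}=\infty$, the set of indices with ratio $\ge k^3$ is infinite, and along that infinite set $S_n\to\infty$, so one can always find $n_k>m_{k-1}$ meeting both thresholds. The estimate $S_{m_k}-S_{n_k}\le (k-1)n_k\,s_{n_k}\le S_{n_k+1}/k^2$ is clean, and together with $S_{n_k+1}\le S_{n_k}+s_0$ and $T_{k-1}\le S_{n_k}/k$ it gives $T_k/S_{m_k}\to 0$ directly.

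Your route differs from the paper's in organization. The paper proceeds by two reductions: first it shows that it suffices to produce $n_k,m_k$ with $\sum_k (S_{m_k}-S_{n_k})/S_{m_k}<\infty$ and $m_k/n_k\to\infty$, then reduces further to $S_{m_k}/S_{n_k}\to 1$ with $m_k/n_k\to\infty$, and finally constructs $m_k$ \emph{implicitly} as the first index with $S_{m_k+1}-S_{n_k}\ge S_{n_k}M_k^{-1/2}$ (where $M_k=S_{n_k}/(n_k s_{n_k-1})$), deducing $m_k/n_k\to\infty$ afterwards and passing to a subsequence to recover the remaining conditions. You instead fix $m_k=kn_k$ \emph{explicitly} and bound $S_{m_k}-S_{n_k}$, while folding the ``subsequence'' step into the induction by demanding $S_{n_k}\ge k(T_{k-1}+k)$ from the outset. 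The core mechanism---large asymptotic ratio forces $s_{n_k}$ to be tiny relative to $S_{n_k}/n_k$, so a long extension past $n_k$ barely moves $S$---is the same, but your packaging is more direct and avoids the intermediate reductions and the final subsequence passage.
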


\begin{proof}
	First, observe that it is enough to show that there exist sequences $n_k$ and $m_k$ such that $n_k\leq m_k< n_{k+1}$,
	\begin{equation}\label{eq:enough}
	\lim_{k\to\infty}\frac{k}{S_{n_k}}=\lim\limits_{k\to\infty}\frac{n_k}{m_k}=0\text{ and }\sum\limits_{k=1}^\infty\frac{S_{m_k}-S_{n_k}}{S_{m_k}}<\infty.
	\end{equation}
	The implication of \eqref{eq:enough} $\Rightarrow$ \eqref{eq:mainneed} follows by Kronecker lemma \footnote{Kronecker lemma: Let $\{b_n\}$ be a sequence of positive increasing numbers with $\lim\limits_{n\to \infty} b_n= \infty$. Let $\{x_n\}$ be a sequence of numbers such that $\sum_{n=1}^{\infty} x_n$ converges. Then
	$$
\lim\limits_{N\to \infty} \frac{1}{b_N} \sum_{n=1}^{N} b_nx_n=0.
$$
We apply Kronecker lemma for $b_k=S_{m_k}$ and $x_k=\frac{S_{m_k}-S_{n_k}}{S_{m_k}}$.
} (\cite[Page 390, Lemma 2]{S1996}).
	
	In order to show \eqref{eq:enough} we construct sequences $n_k\le m_k$ $(k\in \N)$ such that
	\begin{equation}\label{eq:need1}
	\lim\limits_{k\to\infty}\frac{S_{m_k}}{S_{n_k}}=1\text{ and }\lim\limits_{k\to\infty}\frac{m_k}{n_k}=\infty.
	\end{equation}
	If \eqref{eq:need1} holds then by passing to some subsequences $n_{k_\ell}$ and $m_{k_\ell}$, we will get $m_{k_{\ell-1}}< n_{k_{\ell}}$ and $\frac{\ell}{S_{n_{k_{\ell}}}}\leq2^{-\ell},\frac{S_{m_{k_\ell}}-S_{n_{k_\ell}}}{S_{m_{k_\ell}}}\leq2^{-\ell}$ and so \eqref{eq:enough} follows for the sequences $n_{k_\ell}$ and $m_{k_\ell}$.
	
	It remains to find sequences $\{n_k\}_{k\in \N}$ and $\{m_k\}_{k\in \N}$ satisfying \eqref{eq:need1}. Let $n_k$ be the subsequence for which $\lim\limits_{k\to\infty}\frac{S_{n_k}}{n_ks_{n_k-1}}=\infty$. For simplicity, let $M_k=\frac{S_{n_k}}{n_ks_{n_k-1}}$. Let $m_k$ be the smallest positive integer greater than or equal to $n_k$ such that
	$$
	S_{m_k+1}-S_{n_k}\geq S_{n_k}M_k^{-1/2}.
	$$
	Thus,
	$$
	1\leq\frac{S_{m_k}}{S_{n_k}}<1+M_k^{-1/2},
	$$
	and hence, the first claim of \eqref{eq:need1} holds since $M_k\to\infty$ as $k\to \infty$. On the other hand, by the monotonicity of $s_n$
	\[
	(m_k-n_k+1)s_{n_k}\geq S_{m_k+1}-S_{n_k}\geq S_{n_k} M_k^{-1/2}
	\]
	and so,
	$$
	\frac{m_k-n_k+1}{n_k}\geq\frac{S_{n_k}}{n_ks_{n_k-1}}M_k^{-1/2}=M_k^{1/2},
	$$
	which implies that $m_k/n_k\to\infty$ as $k\to\infty$. This completes the proof.
\end{proof}

\begin{proof}[Proof of Theorem~\ref{thm:average}\eqref{it:thmaver2}]
	Let $n_k\leq m_k<n_{k+1}$ as in Lemma~\ref{lem:UBAR} which satisfy \eqref{eq:enough} and \eqref{eq:need1}. Let us define the sequence $a_\ell$ as follows:
	$$
	a_\ell:=
	\begin{cases}
	-1 & \text{if } n_{2k}\leq\ell< m_{2k}\text{ for some $k\in\N$},\\
	1 & \text{if } n_{2k+1}\leq\ell< m_{2k+1}\text{ for some $k\in\N$},\\
	0 &\text{if }  m_{k}\leq\ell<n_{k+1}.
	\end{cases}
	$$
	Let $n\in\N$ be arbitrary and let $k$ be such that $n_k\leq n<n_{k+1}$. Then we have
	$$
	\left|\sum_{k=0}^{n-1}s_ka_k\right|\le 	\sum_{k=0}^{n-1}s_k\left|a_k\right|\le \sum_{\ell=0}^{k-1}(S_{m_{\ell}}-S_{n_\ell})+S_{\max\{n,m_k\}}-S_{n_k}.
	$$
	It follows that
	\[
	\left|\frac{1}{S_n}\sum_{k=0}^{n-1}s_ka_k\right|\leq\frac{\sum_{\ell=0}^{k-1}(S_{m_{\ell}}-S_{n_\ell})}{S_{m_{k-1}}}+\frac{S_{m_k}-S_{n_k}}{S_{n_k}}.
	\]
	Since $n_k\leq m_k<n_{k+1}$ satisfy \eqref{eq:enough} and \eqref{eq:need1}, the right hand side tends to zero as $n$ tends to infinity. On the other hand, we have
	$$
	\sum_{j=0}^{m_{2k+1}-1}a_j\geq m_{2k+1}-n_{2k+1}-m_{2k} \text{ and }
	\sum_{j=0}^{m_{2k}-1}a_j\leq-m_{2k}+n_{2k}+m_{2k-1}.
	$$
	Since $m_{\ell}<n_{\ell+1}\le m_{\ell+1}$ and $\lim\limits_{\ell\to \infty}\frac{n_{\ell}}{m_{\ell}}=1$, we conclude that
	\[
	\begin{split}
	\limsup_{n\to\infty}\frac{1}{n}\sum_{k=0}^{n}a_k&\geq\lim_{k\to\infty} \left( \frac{m_{2k+1}-n_{2k+1}}{m_{2k+1}}-\frac{m_{2k}}{m_{2k+1}}\right)=1\text{ and}\\
	\liminf_{n\to\infty}\frac{1}{n}\sum_{k=0}^{n}a_n&\leq\lim_{k\to\infty}\left( \frac{m_{2k}-n_{2k}}{m_{2k}}(-1)+\frac{m_{2k-1}}{m_{2k}}\right)=-1.
	\end{split}
	\]
\end{proof}

\section{Variational principle}\label{sec:Variational principle}

\subsection{Subshift of finite type}
Let $\mathcal{A}=\{1,\ldots,K\}$ be a finite alphabet, and let $\Sigma=\mathcal{A}^\N$.  Denote $\Sigma_n$ the set of $n$-length finite word. Moreover, denote $\Sigma_*$ the set of all finite prefixes of the infinite words in $\Sigma$. For an $\ii=(i_0,i_1,\ldots)\in\Sigma$ and $m>n\geq0$ let $\ii|_n^m=(i_n,\ldots,i_m)$ be the subword of $\ii$ between the positions $n$ and $m$, and for short denote by $\ii|_n$ the first $n$ element of $\ii$, i.e. $\ii|_n=\ii|_{0}^{n-1}$. For an $\ii\in\Sigma_*$, denote $|\ii|$ the length of $\ii$ and let $[\ii]$ denote the corresponding cylinder set, that is, $[\ii]:=\{\jj\in\Sigma:\jj|_{|\ii|}=\ii\}$. We use $l(\cdot)$ to denote the level of cylinder. Moreover, The space $\Sigma$ is metrizable with metric
\begin{equation}\label{eq:metric}
	d(\ii,\jj)=e^{-\min\{n\geq0:i_n\neq j_n\}}.
\end{equation}
For short, denote $\ii\wedge\jj=\min\{n\geq0:i_n\neq j_n\}$. { Note that $h_{\rm top}=\dim_{\rm H}$ for $(\Sigma,d)$ with respect to the left-shift operator $\sigma$.}

Let $\mathbf{A}$ be a $K\times K$ matrix with entries $0,1$. We say that the set $\Sigma_{\mathbf{A}}\subseteq\Sigma$ is {\it subshift of finite type} if
$$
\Sigma_{\mathbf{A}}=\{\ii=(i_0,i_1,\ldots)\in\mathcal{A}^\N:\mathbf{A}_{i_k,i_{k+1}}=1\text{ for every }k=0,1,\ldots\}.
$$
We call the matrix $\mathbf{A}$ the {\it adjacency matrix}. Let us denote the set of admissible words with length $n$ (i.e. $n$-length subwords of some element in $\Sigma_{\mathbf{A}}$) by $\Sigma_{\mathbf{A},n}$ and denote $\Sigma_{\mathbf{A},*}$ the set of all admissible words. Without loss of generality, we may assume that $\Sigma_{\mathbf{A},1}=\mathcal{A}$. Moreover, we say that $\Sigma_{\mathbf{A}}$ is {\it aperiodic and irreducible} if there exists $r\geq1$ such that every entry of $\mathbf{A}^r$ is strictly positive.

\subsection{Continuous potentials} Let $\phi$ be a continuous potential. Denote the Birkhoff average by
$$\chi_\phi(\ii) = \lim_{n\to\infty} \frac 1n \sum_{j=0}^{n-1} \phi(\sigma^j(\ii)).$$ Similarly, we write that $\chi_\phi^-$ is the liminf and $\chi_\phi^+$ is the limsup. Denote by $H_\phi(s) := h_{\rm top}(\left\{\ii\in\Sigma_{\bf A}:\chi_\phi(\ii)=s \right\})$.

In this section, we prove a variational principle of continuous potential. To do this we need to use a result on the multifractal spectrum of irregular sets. The first result of this type comes from Olsen (see for example \cite{O}), but we will use a variant of the following result of Fan, Schmeling, Troubetskoy in \cite[Theorem 3.3]{FST2013}.

\begin{thm} \label{thm:fst}
	Let $(\Sigma_{\mathbf{A}}, \sigma)$ be an irreducible and aperiodic subshift of finite type. Let $\varphi: \Sigma_{\mathbf{A}} \to \R$ be a H\"older continuous potential. For any $t\in \R$ we have
	
	\[
	h_{\rm top}(\left\{\chi_\varphi^-<t \right\})= h_{\rm top}(\left\{\chi_\varphi^+<t \right\}) = \sup_{s<t} H_\varphi(s),
	\]
	\[
	h_{\rm top}(\left\{\chi_\varphi^-\geq t \right\}) = h_{\rm top}(\left\{\chi_\varphi^+\geq t \right\}) = \sup_{s\geq t} H_\varphi(s).
	\]
\end{thm}

{ We use here the convention that $H_\varphi(t)$ is $0$ outside $L(\varphi)$ and so for any $t$ smaller than or equal to the left-endpoint of $L(\varphi)$, $\sup_{s<t}H_\varphi(s)=0$. In particular, $\{\chi_\varphi^-<t\}=\{\chi_\varphi^+<t\}=\emptyset$ for such $t$.
	
Note that by replacing $\varphi$ to $-\varphi$ the equalities
\[
h_{\rm top}(\left\{\chi_\varphi^-\leq t \right\})= h_{\rm top}(\left\{\chi_\varphi^+\leq t \right\}) = \sup_{s\leq t} H_\varphi(s),
\]
\[
h_{\rm top}(\left\{\chi_\varphi^-> t \right\}) = h_{\rm top}(\left\{\chi_\varphi^+> t \right\}) = \sup_{s> t} H_\varphi(s)
\]
hold as well.}

The theorem above has been stated in \cite{FST2013} only for full shift, however, the used smoothness conditions during the proof (see \cite[Theorem~3.2]{FST2013}) hold for subshift of finite type, see { Barreira, Saussol and Schmeling \cite[Section~4.1]{BSS2}, which follows from \cite[Example~4.2.6, Remark~4.3.4, Theorem~4.3.5]{K} and \cite[Theorem~1.22]{Bo}.}

Let $\phi$ be a continuous potential. We want to investigate the multifractal properties of $\phi$. The following theorem is folklore and may be known to some experts. However we were unable to find it in the literature. So we prove it ourselves.
\begin{thm} \label{thm:continuous potential}
	Let $(\Sigma_{\mathbf{A}}, \sigma)$ be an irreducible and aperiodic subshift of finite type. Let $\phi: \Sigma_{\mathbf{A}} \to \R$ be a  continuous potential. For any $t\in \R$ we have
	
	\[
	h_{\rm top}(\left\{\chi_\phi^-<t \right\}) = h_{\rm top}(\left\{\chi_\phi^+<t \right\}) = \sup_{s<t} H_\phi(s),
	\]
	\[
	h_{\rm top}(\left\{\chi_\phi^-\geq t \right\}) = h_{\rm top}(\left\{\chi_\phi^+\geq t \right\}) = \sup_{s\geq t} H_\phi(s).
	\]
\end{thm}

For any natural number $n$, let $\phi_n$ be the maximal potential smaller than $\phi$ but constant on $n$-th level cylinders, namely,

\[
\phi_n(x) = \min_{y\in [x{|_n}]} \phi(y).
\]
Clearly, every $\phi_n$ is a H\"older continuous potential. Also, there is a decreasing sequence $\{\varepsilon_n \}_{n\in \N}$ with $\varepsilon_n \searrow 0$ such that
\begin{equation} \label{eqn:compn}
0 \leq \phi-\phi_n \leq \varepsilon_n.
\end{equation}
Moreover, for $m>n$ we have $\phi_n \leq \phi_m \leq \phi$, and hence
\begin{equation} \label{eqn:compmn}
0 \leq \phi_m-\phi_n \leq \varepsilon_n.
\end{equation}

To simplify the notation let us write $H_n = H_{\phi_n}$, $\chi_n=\chi_{\phi_n}$, $\chi_n^-=\chi_{\phi_n}^-$ and $\chi_n^+=\chi_{\phi_n}^+$. Let us state the well known properties of the function $H_n(s)$. By Barreira, Saussol and Schmeling \cite[Theorem~2]{BSS2}, it is a smooth concave function, nonnegative, defined on some closed interval $L(\phi_n)=[\alpha_n^-, \alpha_n^+]$, with a single maximum at some point $\alpha_n$, and $H_n(\alpha_n) = h_{\rm top}(\Sigma_{\mathbf{A}})$.

{ By \eqref{eqn:compmn}, we have $\alpha_n^+\leq\alpha_m^+\leq\alpha_n^++\varepsilon_n$ (and similar for $\alpha^-$'s) whenever $m>n$. Thus, the limits $\alpha^-: = \lim_{n\to \infty} \alpha_n^-$ and $\alpha^+: = \lim_{n\to \infty} \alpha_n^+$ exist. Also, $\alpha^+_n\leq\alpha^+ \leq \alpha_n^++\varepsilon_n$, and the same for $\alpha^-$. Clearly, $L(\phi)=[\alpha^-,\alpha^+]$.}

Firstly, we study the convergence property of $H_n$.
\begin{lem}\label{lem:Hn}
	The functions $H_n$ pointwise converge to a limit function $H$ on $[\alpha^-,\alpha^+)$. Furthermore, $H$ is continuous and concave on $(\alpha^-,\alpha^+)$, and $H_n$ converges to $H$ uniformly on every compact subinterval in $(\alpha^-,\alpha^+)$.
\end{lem}
\begin{proof}{
	By \eqref{eqn:compn} we see that for any $\ii\in \Sigma_{\mathbf{A}}$ and $m>n$
	\begin{equation}\label{eqn:compn2}
	0 \leq \chi_m^-(\ii) - \chi_n^-(\ii) \leq \varepsilon_n\text{ and }0 \leq \chi^-(\ii) - \chi_n^-(\ii) \leq \varepsilon_n
	\end{equation}
	(the same holds for $\chi^+$).
	
	Let us define two maps $H^\ell_n(t):=\sup_{s\leq t}H_n(s)$ and $H^r_n(t):=\sup_{s>t}H_n(s)$.} { Please pay attention to the fact that those definitions are not symmetric: we use sharp inequality in $H^r$ but not in $H^\ell$.} { By concavity and Theorem~\ref{thm:fst},
	\[
	H_n^\ell(t) =\begin{cases} h_{\rm top}(\Sigma_{\mathbf{A}}) & \text{ for $t>\alpha_n$,}\\
		H_n(t) & \text{ for }t\leq\alpha_n.
		\end{cases}
	\]
	Similarly,
	\[
	H_n^r(t) =\begin{cases} 	H_n(t) & \text{ for $t\in(\alpha_n,\alpha^+_n)$,}\\
		h_{\rm top}(\Sigma_{\mathbf{A}}) & \text{ for }t\leq\alpha_n.
	\end{cases}
	\]
	It is easy to see that the map $H^\ell_n$ is concave and continuous on $[\alpha_n^-,\alpha_n^+]$ and $H^r_n$ is concave and continuous on $[\alpha_n^-,\alpha_n^+)$.
	
	Let $t\in[\alpha^-,\alpha^+)$ now be arbitrary but fixed. Since $\alpha_n^-\leq\alpha^-$ and $\alpha_n^+\to\alpha^+$ as $n\to\infty$, $t\in L(\phi_n)$ for every sufficiently large $n$.  Furthermore, by \eqref{eqn:compn2} for every $n<m$ sufficiently large
	\[
	\{\chi_m^-\leq t\}\subseteq\{\chi_n^-\leq t\}\text{ and }\{\chi_n^->t\}\subseteq\{\chi_m^->t\}.
	\]
	Hence, by Theorem~\ref{thm:fst}
	$$
	H_m^\ell(t)\leq H_n^\ell(t)\text{ and }H_m^r(t)\geq H_n^r(t)
	$$
	for every $m>n$ sufficiently large.	Since $0\leq H_n^\ell(t),H_n^r(t)\leq h_{\rm top}(\Sigma_{\mathbf{A}})$, the limits $H^\ell(t):=\lim_{n\to\infty}H_n^\ell(t)$ and $H^r(t):=\lim_{n\to\infty}H_n^r(t)$ exist for all $t\in[\alpha^-,\alpha^+)$.
	
	Since the maps $H_n^\ell$ and $H_n^r$ are concave, we get that $H^\ell$ and $H^r$ are concave on $[\alpha^-,\alpha^+)$, and in particular, continuous on $(\alpha^-,\alpha^+)$. By Dini's Theorem, $H^\ell_n$ and $H^r_n$ converges uniformly to $H^\ell$ and $H^r$ on any compact subinterval of $(\alpha^-,\alpha^+)$.
	
	Since $H_n(t)=\min\{H^\ell_n(t),H^r_n(t)\}$, the claim follows for the map $H(t):=\min\{H^\ell(t),H^r(t)\}$.}
\end{proof}

{ In particular, it follows from the proof that $\sup_{s\leq t}H(s)=H^\ell(t)$ and $\sup_{s> t}H(s)=H^r(t)$ for $t\in[\alpha^-,\alpha^+)$.}

We have the following immediate consequence of previous properties.

\begin{lem}\label{lem2} For every $t\in(\alpha^-,\alpha^+)$,
	\[
	h_{\rm top}(\left\{\chi^-<t \right\})= h_{\rm top}(\left\{\chi^+<t \right\})= \sup_{s<t} H(s),
	\]
	\[
	h_{\rm top}(\left\{\chi^-\leq t \right\})= h_{\rm top}(\left\{\chi^+\leq t \right\})= \sup_{s\leq t} H(s).
	\]
\end{lem}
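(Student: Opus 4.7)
\medskip

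\noindent\textbf{Plan of proof.} The strategy is to bootstrap Theorem~\ref{thm:fst}, which applies to the H\"older approximations $\phi_n$, up to the continuous potential $\phi$. The uniform approximation $0\le \phi-\phi_n\le\varepsilon_n$ supplies everything we need on the dynamical side; the pointwise (in fact locally uniform) convergence $H_n\to H$ established in Lemma~\ref{lem:Hn} supplies what we need on the multifractal side.

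\medskip

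\noindent\textbf{Step 1: sandwich the level sets.} From $0\le \phi-\phi_n\le\varepsilon_n$, averaging along orbits and passing to $\liminf$/$\limsup$ gives
\[
\chi_n^-\le \chi^-\le \chi_n^-+\varepsilon_n, \qquad \chi_n^+\le \chi^+\le \chi_n^++\varepsilon_n.
\]
This yields, for every $t\in\R$, the inclusions
\[
\{\chi_n^- < t-\varepsilon_n\}\ \subset\ \{\chi^- < t\}\ \subset\ \{\chi_n^- < t\},
\]
\[
\{\chi_n^- \ge t\}\ \subset\ \{\chi^- \ge t\}\ \subset\ \{\chi_n^- \ge t-\varepsilon_n\},
\]
and likewise with $\chi^+$ in place of $\chi^-$.

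\medskip

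\noindent\textbf{Step 2: apply Theorem~\ref{thm:fst} to $\phi_n$.} Each $\phi_n$ is locally constant, hence H\"older continuous, so Theorem~\ref{thm:fst} gives, for every $u\in\R$,
\[
h_{top}(\{\chi_n^\pm < u\})=\sup_{s<u} H_n(s), \qquad h_{top}(\{\chi_n^\pm \ge u\})=\sup_{s\ge u} H_n(s).
\]
Taking topological entropy in the Step 1 inclusions (and using monotonicity of $h_{top}$ under inclusion) produces the sandwich
\[
\sup_{s<t-\varepsilon_n} H_n(s)\ \le\ h_{top}(\{\chi^\pm < t\})\ \le\ \sup_{s<t} H_n(s),
\]
\[
\sup_{s\ge t} H_n(s)\ \le\ h_{top}(\{\chi^\pm \ge t\})\ \le\ \sup_{s\ge t-\varepsilon_n} H_n(s).
\]

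\medskip

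\noindent\textbf{Step 3: pass to the limit.} I now let $n\to\infty$ and show that both sides of each sandwich converge to the corresponding supremum of $H$. One direction is cheap: for any fixed $s_0<t$ (resp.\ $s_0\ge t$) pointwise convergence $H_n(s_0)\to H(s_0)$ gives $\liminf_n \sup_{s<t-\varepsilon_n} H_n(s)\ge H(s_0)$, hence $\liminf_n \ge \sup_{s<t} H(s)$. For the matching upper bound I use that each $H_n$ is concave and bounded by $h_{top}(\Sigma_{\bf A})$, so by the slope estimate \eqref{eqn:conc2} the family $\{H_n\}$ is equi-Lipschitz on every compact subinterval of $(\alpha^-,\alpha^+)$; combined with pointwise convergence this promotes $H_n\to H$ to uniform convergence on compact subsets of $(\alpha^-,\alpha^+)$. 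The sup in $\sup_{s<t}H_n(s)$ is, by concavity of $H_n$, attained either at the maximizer $\alpha_n$ (with value $h_{top}(\Sigma_{\bf A})$) or as $s\to t^-$ (with value $H_n(t)$), and the same holds for $H$. A short case analysis according to the relative position of $t$ and the plateau $\{H=h_{top}(\Sigma_{\bf A})\}$ then shows $\sup_{s<t\pm\varepsilon_n} H_n(s)\to \sup_{s<t} H(s)$ and likewise $\sup_{s\ge t\pm\varepsilon_n} H_n(s)\to \sup_{s\ge t} H(s)$.

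\medskip

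\noindent\textbf{Main obstacle.} The only nontrivial point is Step 3: pointwise convergence of $H_n$ to $H$ is a priori not enough to interchange $\lim$ and $\sup$, and the explicit bound $|H(t)-H_n(t)|\le 2c(t)\varepsilon_n$ degenerates at the endpoints $\alpha^\pm$. The resolution is the concavity of $H_n$ together with the uniform upper bound $h_{top}(\Sigma_{\bf A})$, which delivers equi-Lipschitz estimates on compact subintervals of $(\alpha^-,\alpha^+)$; cases with $t$ outside $(\alpha^-,\alpha^+)$ are handled separately, as there the suprema are simply $h_{top}(\Sigma_{\bf A})$ or $0$ for all sufficiently large $n$.
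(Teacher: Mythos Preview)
Your proof is correct and follows the same approach as the paper's own argument. The paper's proof is terse---it simply cites Theorem~\ref{thm:fst}, Lemma~\ref{lem:Hn}, and the sandwich inclusion $\{\chi_{n}^-<t-\varepsilon_n\}\subset\{\chi^-<t\}\subset\{\chi_{n}^-<t+\varepsilon_n\}$---while you spell out the limit passage in Step~3 (concavity, equi-Lipschitz on compacta, the case analysis in $t$) that the paper leaves implicit; the underlying strategy and ingredients are identical.
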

\begin{proof}
	{ By \eqref{eqn:compn2},
	$$
		\left\{\ii\in\Sigma_{\bf A}:\chi_{n}^-(\ii)<t-\varepsilon_n\right\} \subseteq \left\{\ii\in\Sigma_{\bf A}:\chi^-(\ii)<t \right\} \subseteq \left\{\ii\in\Sigma_{\bf A}:\chi_{n}^-(\ii)<t \right\}
	$$
	Hence, it follows from Theorem \ref{thm:fst} that for every $t\in(\alpha^-,\alpha^+)$ and every sufficiently large $n$
	$$
	H_n^\ell(t-\varepsilon_n)\leq h_{\rm top}(\{\chi^-<t\})\leq H^\ell_n(t).
	$$
	Hence, for $t\in(\alpha^-,\alpha^+)$ the claim follows by Lemma \ref{lem:Hn}. The proof of the other cases are similar and we omit it.}
\end{proof}

The final stage of our investigation will be the following result:

\begin{lem}\label{lem3}
	For $t\in [\alpha^-, \alpha^+)$, we have $H_\phi(t)=H(t)$.
\end{lem}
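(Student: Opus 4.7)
The plan is to prove the two inequalities $H_\phi(t)\le H(t)$ and $H_\phi(t)\ge H(t)$ separately for $t\in(\alpha^-,\alpha^+)$.

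For the upper bound, first observe that $E_\phi(t)\subset\{\chi^-\ge t\}$ and $E_\phi(t)\subset\{\chi^+\le t\}$. The first inclusion and Lemma~\ref{lem2} give $H_\phi(t)\le\sup_{s\ge t}H(s)$. For the second, since $\{\chi^+\le t\}\subset\{\chi^+<t+\delta\}$ for every $\delta>0$, Lemma~\ref{lem2} yields $H_\phi(t)\le\sup_{s<t+\delta}H(s)$; letting $\delta\to 0^+$, and using that $H$ is continuous on $(\alpha^-,\alpha^+)$ (being the pointwise limit of the concave functions $H_n$, hence itself concave on that interval), gives $H_\phi(t)\le\sup_{s\le t}H(s)$. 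Since $H$ attains its maximum $h_{\rm top}(\Sigma_{\bf A})$ at some interior point $\alpha$, a case analysis on whether $t\le\alpha$ or $t>\alpha$ shows $\min\bigl(\sup_{s\le t}H(s),\sup_{s\ge t}H(s)\bigr)=H(t)$, and so $H_\phi(t)\le H(t)$.

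For the lower bound, I would invoke the multifractal formalism for each H\"older potential $\phi_n$ (as developed in Barreira--Saussol--Schmeling): for each $s$ in the interior of the spectrum of $\phi_n$, there is a unique ergodic equilibrium state $\mu_n^{(s)}$ for a suitable multiple of $\phi_n$ satisfying $\int\phi_n\,d\mu_n^{(s)}=s$ and $h(\mu_n^{(s)})=H_n(s)$, with $s\mapsto\mu_n^{(s)}$ weak-$*$ continuous. Define $F_n(s):=\int\phi\,d\mu_n^{(s)}=s+c_n(s)$, where $c_n(s):=\int(\phi-\phi_n)\,d\mu_n^{(s)}\in[0,\varepsilon_n]$ by \eqref{eqn:compn}. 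Then $F_n$ is continuous with $F_n(t-\varepsilon_n)\le t\le F_n(t)$, so by the intermediate value theorem there exists $s_n^\star\in[t-\varepsilon_n,t]$ with $F_n(s_n^\star)=t$. Birkhoff's ergodic theorem then gives $\mu_n^{(s_n^\star)}(E_\phi(t))=1$, and the standard inequality $h_{\rm top}(A)\ge h(\mu)$ for ergodic $\mu$ with $\mu(A)=1$ (with Bowen's definition of topological entropy for non-invariant subsets) yields $H_\phi(t)\ge h(\mu_n^{(s_n^\star)})=H_n(s_n^\star)$.

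To finish, the slope estimate \eqref{eqn:conc2} implies that the $H_n$ are uniformly Lipschitz on any compact subinterval of $(\alpha^-,\alpha^+)$ for $n$ large, so $|H_n(s_n^\star)-H_n(t)|\le C(t)\varepsilon_n$; combined with the bound $|H_n(t)-H(t)|\le 2c(t)\varepsilon_n$ from the remark after Lemma~\ref{lem:Hn}, we obtain $H_n(s_n^\star)\to H(t)$ and therefore $H_\phi(t)\ge H(t)$. The main obstacle is producing an ergodic measure realizing $\int\phi\,d\mu=t$ with entropy close to $H(t)$: directly constructing such a measure for the merely \emph{continuous} potential $\phi$ is subtle, so the strategy is to use the H\"older approximants $\phi_n$, for which the full multifractal formalism is available, and then to tune the spectrum parameter $s$ by the intermediate value theorem so that the expectation of $\phi$ (rather than $\phi_n$) hits the target value $t$.
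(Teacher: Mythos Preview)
Your proof is correct, and both the upper and lower bounds are handled somewhat differently from the paper.

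For the upper bound, the paper argues directly via the approximants: $\{\chi_\phi = t\}\subset\{\chi_n\le t\le \chi_n+\varepsilon_n\}$ and bounds the entropy of the latter using the Lipschitz estimate \eqref{eqn:conc2}. Your route through Lemma~\ref{lem2} and the concavity of $H$ is equally valid and perhaps cleaner, since it reuses a lemma already established rather than repeating the sandwich argument.

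The lower bound is where the genuine difference lies. The paper builds the required ergodic measure by hand: it fixes $\varepsilon$ and $n$, collects many $N$-words whose $\phi_n$-averages are near $t+2\varepsilon$ and many whose averages are near $t-2\varepsilon$, forms the Bernoulli-like measure $\mu^p$ on random admissible concatenations with mixing parameter $p$, and applies the intermediate value theorem in $p$ to hit $\int\phi\,d\mu^{p(t)}=t$. The entropy bound then comes from counting words. You instead invoke the equilibrium-state branch of the multifractal formalism for the H\"older potentials $\phi_n$: take the canonical ergodic measure $\mu_n^{(s)}$ with $\int\phi_n\,d\mu_n^{(s)}=s$ and $h(\mu_n^{(s)})=H_n(s)$, use weak-$*$ continuity in $s$ to apply the intermediate value theorem to $F_n(s)=\int\phi\,d\mu_n^{(s)}$ (which lies in $[s,s+\varepsilon_n]$), and conclude via Birkhoff. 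Your argument is shorter and leans on machinery the paper already cites (\cite{BSS2}); the paper's construction is more self-contained and makes the source of the entropy explicit at the combinatorial level. Both apply the intermediate value theorem, but to different one-parameter families of measures.
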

\begin{proof}
	{ The upper bound is almost immediate. If $t\in(\alpha^-,\alpha^+)$ and $\chi_\phi(\ii)=t$ then by \eqref{eqn:compn2} $\chi_n(\ii) \leq \chi_\phi(\ii) \leq \chi_n(\ii) + \varepsilon_n$, and
	\[
	h_{\rm top}(\{\chi_\phi=t\})\leq h_{\rm top}(\left\{\chi_n \leq t \leq \chi_n + \varepsilon_n\right\})\leq\min\{H^\ell_n(t),H^r_n(t-\varepsilon_n)\}.
	\]
	By Lemma~\ref{lem:Hn}, the left-hand side converges to $H(t)$ as $n\to\infty$. On the other hand, if $t=\alpha^-$ then again by \eqref{eqn:compn2} $$h_{\rm top}(\{\chi_\phi=\alpha^-\})\leq h_{\rm top}(\left\{\chi_n \leq\alpha^- \right\})\leq H_n^\ell(\alpha^-)\to H^\ell(\alpha^-)=H(\alpha^-).$$
	
	Let now $t\in(\alpha^-,\alpha^+)$ be arbitrary but fixed.} For the lower bound we need to construct a large (in the sense of entropy) set of points for which $\chi_\phi =t$. We will do this by constructing an ergodic $\sigma$-invariant measure $\mu$ such that $\int \phi d\mu = t$, then the generic point for this measure belongs to $\{\chi_\phi=t\}$ and the topological entropy of those points equals the metric entropy of $\mu$.
	
Let us first remind the important property of irreducible subshifts of finite type: there exists a constant $K$ such that for any two admissible words $\ii,\jj$ there exists an admissible word $\kk, |\kk|=K$ such that the word $\ii\kk\jj$ is admissible. Given two admissible words $\ii,\jj, |\jj|>2K$ we will call their {\it admissible concatenation} the word $\ii\kk\jj$, where $\kk$ is just the connecting word making the word $\ii\kk\jj$ admissible. Such $\kk$ is in general not uniquely defined, so we can for example choose the first in the alphabetical order of all the fitting $\kk$'s. Similarly, for any finite or infinite sequence of admissible words of length larger than $2K$ we can define their admissible concatenation.
	
	Denote
	\[
	\rho_n = \frac 1n \sum_{i=1}^n \varepsilon_i,
	\]
	we have $\rho_n\searrow 0$.
	
	Fix some $\varepsilon < \min(t-\alpha^-, \alpha^+-t)/3$. Let $n$ be large enough that $\varepsilon > 2\rho_n$. For every $\delta>0$ we can find $N>n+4K||\phi||/\varepsilon$ large enough that the two following properties hold. First, there exist at least $e^{N(H_n(t+2\varepsilon)-\delta)}$ different words $(\ii_i)_i$ of length $N$ such that there exists $\jj_i\in[\ii_i]$ such that
	
	\begin{equation} \label{eqn:xi}
	\left|\frac 1N \sum_{j=0}^{N-1} \phi_n(\sigma^j(\jj_i)) - t - 2\varepsilon\right| < \varepsilon.
	\end{equation}
	Second, there exist at least $e^{N(H_n(t-2\varepsilon)-\delta)}$ different words $(\kk_j)_j$ of length $N$ such that there is $\llll_j\in[\kk_j]$ such that
	
	\begin{equation} \label{eqn:yi}
	\left|\frac 1N \sum_{j=0}^{N-1} \phi_n(\sigma^j(\llll_j)) - t + 2\varepsilon\right| < \varepsilon.
	\end{equation}
	
	Note that if $\jj_i$ satisfies \eqref{eqn:xi} then every $\jj\in [\jj_i|_N]=[\ii_i]$ satisfies
	\[
	\left|\frac 1N \sum_{j=0}^{N-1} \phi(\sigma^j(\jj)) - t - 2\varepsilon\right| < \varepsilon + \rho_N + \varepsilon_n < \varepsilon + 2\rho_n < 2\varepsilon,
	\]
	hence if we take any point $z$ obtained by an infinite admissible concatenation of the words $\gamma_i$, we will have $\chi_\phi^-(z), \chi_\phi^+(z) > t$. Indeed, the Birkhoff sum over each 'connecting' part of the admissible concatenation construction gives a correction $K||\phi||<N\varepsilon/4$, and so $\chi_\phi^-(z)>t+ \varepsilon - \rho_n - \varepsilon/4=t+\varepsilon/4$. Similarly, any infinite admissible concatenation of words $\kk_j$ gives upper and lower Birkhoff averages strictly smaller than $t$.
	
	For $p\in [0,1]$ consider the measure $\mu_N^p$ defined as a distribution of infinite admissible concatenations of words $(\ii_i)_i \cup (\kk_j)_j$ according to the following rule: at any position we have with probability $p$ one of the words $\ii_i$ (with equal probability each) and with probability $1-p$ one of the words $\kk_j$ (with equal probability each). This measure is $\sigma^N$-invariant, so we define a new measure
	
	\[
	\mu^p = \frac 1N \sum_{k=0}^{N-1} \sigma^k_* \mu_N^p,
	\]
	this measure is $\sigma$-invariant. As $\mu_N^p$ is $\sigma^N$-ergodic, $\mu^p$ is $\sigma$-ergodic. Moreover, $\int \phi d\mu^p$ is a continuous function of $p$, with
	
	\[
	\int \phi d\mu^1 < t < \int \phi d\mu^0.
	\]
	We can thus find some measure $\mu^{p(t)}$ for which $\int \phi d\mu^{p(t)}=t$. Finally, an easy calculation shows that for every $p$
	
	\[
	h(\mu^p) \geq (1-\frac KN)\min\{H_n(t-2\varepsilon), H_n(t+2\varepsilon)\} - \delta.
	\]
	{ Since all the generic points of $\mu^{p(t)}$ have Birkhoff average of $\phi$ equal to $t$, letting $n\to\infty$, we have
	$$h_{\rm top}(\{\chi_\phi=t\})\geq\min\{H(t-2\varepsilon), H(t+2\varepsilon)\} - \delta.$$
	Since $H$ is continuous at $t$ by Lemma~\ref{lem:Hn}, passing with $\varepsilon$ and $\delta$ to 0 we get the lower bound, and hence the result is proven.}

	{ Finally, we show that $H_\phi(\alpha^-)=H(\alpha^-)$. By \cite[Theorem~2.2]{BRS}, the map $t\mapsto H_\phi(t)$ is continuous on $[\alpha^-,\alpha^+]$.
	Furthermore, by Lemma~\ref{lem:Hn}, $H$ is concave on $[\alpha^-,\alpha^+)$ and so, lower semi-continuous at $\alpha^-$. Hence,
	$$
	H(\alpha^-)\leq\liminf_{t\to\alpha^-}H(t)=\liminf_{t\to\alpha^-}H_\phi(t)=H_\phi(\alpha^-).
	$$}
\end{proof}

\begin{proof}[Proof of Theorem \ref{thm:continuous potential}]
	{ It follows by Lemma \ref{lem:Hn}, Lemma \ref{lem2} and Lemma \ref{lem3} that
	\[\begin{split}
	h_{\rm top}(\{\chi^-<t\})&=h_{\rm top}(\{\chi^+<t\})=\sup_{s<t}H_\phi(s)\text{ and}\\
	h_{\rm top}(\{\chi^-\leq t\})&=h_{\rm top}(\{\chi^+\leq t\})=\sup_{s\leq t}H_\phi(s)
	\end{split}
	\]
	for every $t\in(\alpha^-,\alpha^+)$. For $t\geq\alpha^+$ and $t<\alpha^-$, the claim is straightforward. Finally, for $t=\alpha^-$ we have 	$\{\chi^-<t\}=\{\chi^+<t\}=\emptyset$ and since
	$$
	\{\chi_\phi=\alpha^-\}\subseteq\{\chi^+\leq \alpha^-\}\subseteq \{\chi^-\leq \alpha^-\}\subseteq\{\chi_n^-\leq \alpha^-\}
	$$
	the claim follows by Lemma \ref{lem:Hn} and Lemma \ref{lem3}. Finally, the remaining equalities follow by taking the potential $-\phi$. { Indeed, observe that
$$
\{\chi^+_{-\phi}\leq \alpha^-(-\phi)\}=\{\chi^-_{\phi}\geq \alpha^+(\phi)\}\text{ and }\{\chi^-_{-\phi}\leq \alpha^-(-\phi)\}=\{\chi^+_{\phi}\geq \alpha^+(\phi)\},
$$
and so
\[
\begin{split}
\{\chi_\phi=\alpha^+(\phi)\}&=\{\chi_{-\phi}=\alpha^-(-\phi)\}\subseteq\{\chi^+_{-\phi}\leq \alpha^-(-\phi)\}\\
&\subseteq \{\chi^-_{-\phi}\leq \alpha^-(-\phi)\}\subseteq\{\chi_{-\phi_n}^-\leq \alpha^-(-\phi)\}.
\end{split}
\]
By Lemma \ref{lem:Hn} $H_{-\phi_n}(\alpha^-(-\phi))\to H_{-\phi}(\alpha^-(-\phi))=H_\phi(\alpha^+(\phi))$, which completes the proof.}}
\end{proof}

\begin{proof}[Proof of Theorem \ref{thm:hausdorff}]
	By Lemma \ref{lem:trivialbound}, for $\alpha\in \R$, we have $E_\phi(\alpha)\subset E_\phi^s(\alpha)$. It follows that $h_{\rm top}(E_\phi(\alpha))\le h_{\rm top}(E_\phi^s(\alpha))$.
	
	On the other hand, by Lemma \ref{lem:trivialbound} again, we have
	$$
	E_\phi^s(\alpha)\subseteq \{\chi_\phi^-\le \alpha \}~\text{and}~E_\phi^s(\alpha)\subseteq \{\chi_\phi^+\ge \alpha \}.
	$$
	Then by Theorem~\ref{thm:continuous potential} we have
	$$
	h_{\rm top}(E_\phi^s(\alpha))\le \min \{ \sup_{s\ge \alpha} H_\phi(s), \sup_{s\le \alpha} H_\phi(s) \}=H_\phi(\alpha).
	$$
	where the last equality follows by the concavity and unique maximum of the function $H_\phi$, which completes the proof.
\end{proof}

\section{Packing spectrum with unbounded asymptotic ratio}

Before we prove Theorem~\ref{thm:packing}, we need the following technical lemma, which is a direct consequence of Lemma~\ref{lem:UBAR}

\begin{lem}\label{lem:maintopacking}
	Let $\phi\colon\Sigma_{\bf A}\mapsto\R$ be a continuous potential and let $\alpha\in L(\phi)$. Suppose that the sequence $\{s_n\}$ has unbounded asymptotic ratio, and let $\{n_k\}_{k\in\N}$ and $\{m_k\}_{k\in\N}$ be sequences as in Lemma~\ref{lem:UBAR}. Finally, let $\ii\in E_\phi^s(\alpha)$. Then for every $\jj\in\Sigma_{\bf A}$ such that $j_\ell=i_\ell$ for $m_k\leq\ell\leq n_{k+1}-1, k\in \N, $ we have $\jj\in E_\phi^s(\alpha)$.
\end{lem}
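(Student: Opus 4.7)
My plan is to show that the weighted Birkhoff sums along $\jj$ and along $\ii$ have the same limit. Writing
$$D_n := \frac{1}{S_n} \sum_{k=0}^{n-1} s_k \bigl(\phi(\sigma^k\jj) - \phi(\sigma^k\ii)\bigr),$$
it is enough to prove $D_n \to 0$: combined with the hypothesis $\ii \in E_\phi^s(\alpha)$, this gives the desired convergence to $\alpha$. I will split the indices $\{0,\dots,n-1\}$ into a \emph{bad} set $B_n := \bigcup_\ell [n_\ell, m_\ell-1] \cap [0,n-1]$, on which $\ii$ and $\jj$ may disagree, and a \emph{good} set $G_n$, on which they coincide. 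On $B_n$ I will use the crude bound $|\phi(\sigma^k\jj)-\phi(\sigma^k\ii)|\le 2\|\phi\|_\infty$; on $G_n$ I will exploit uniform continuity of $\phi$.

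The bad part is handled by the same mechanism that appears in the proof of Theorem~\ref{thm:average}\eqref{it:thmaver2}: the total weight $\sum_{k\in B_n} s_k$ is at most $\sum_{m_\ell\le n}(S_{m_\ell}-S_{n_\ell})$ plus one partial-interval remainder, and after dividing by $S_n$ the estimates of Lemma~\ref{lem:UBAR} force it to vanish as $n\to\infty$.

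For the good part, given $\varepsilon>0$, uniform continuity of $\phi$ furnishes an $M = M(\varepsilon)$ such that any two points agreeing on their first $M$ coordinates differ in $\phi$-value by less than $\varepsilon$. If $k\in G_n$ lies in a good interval $[m_\ell, n_{\ell+1}-1]$ with $n_{\ell+1}-k\ge M$, then $\sigma^k\ii$ and $\sigma^k\jj$ share at least $M$ initial coordinates, so the integrand is $<\varepsilon$; summing over such indices contributes at most $\varepsilon$ to $|D_n|$. The remaining "edge" indices — the last $M$ positions of each completed good interval — form at most $K(n):=\#\{\ell:n_\ell\le n\}$ blocks of length $M$, contributing a total weight of at most $K(n)\cdot M\cdot s_0$. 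The first identity in \eqref{eq:mainneed}, i.e.\ $k/S_{n_k}\to 0$, yields $K(n)/S_n\to 0$, so this edge contribution is negligible. Combining the three pieces gives $\limsup_n |D_n|\le \varepsilon$, and letting $\varepsilon\to 0$ concludes.

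The only subtlety I anticipate is the edge bookkeeping: one must use that the uniform continuity modulus $M$ is a single constant, whereas the lengths of the good intervals $n_{\ell+1}-m_\ell$ tend to infinity (because $m_k/n_k\to\infty$), so the edge blocks take up a vanishing proportion of $G_n$. Once that is checked, the remaining estimates follow the same template as the proofs already in Section~2.
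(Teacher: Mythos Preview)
Your proposal is correct and follows essentially the same route as the paper's proof: both split the index set into the ``bad'' blocks $[n_\ell,m_\ell-1]$ (controlled by $\sum_\ell (S_{m_\ell}-S_{n_\ell})/S_{m_k}\to 0$ from Lemma~\ref{lem:UBAR}) and the ``good'' blocks $[m_\ell,n_{\ell+1}-1]$, where uniform continuity handles all but a fixed-length tail of each block and the tails are absorbed via $k/S_{n_k}\to 0$. The paper phrases the continuity estimate through the modulus $\rho_n$ rather than your $M(\varepsilon)$, and treats the partial final interval by an explicit case split, but the structure and the estimates are the same.
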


\begin{proof}
	
{	Let $\rho_n=\max_{\ii,\jj:|\ii\wedge\jj|=n}|\phi(\ii)-\phi(\jj)|$. Notice that $\rho_n$ is decreasing to $0$ as $n$ tends to $\infty$.

Heuristically, the idea of the proof is easy: the union of intervals $\bigcup_k [n_k, m_k]$ has positive upper frequency but has zero 'weighted' frequency. Thus, if we take some sequence $(a_\ell)_\ell$ and then change the values of some $a_\ell, \ell \in [n_k, m_k]$ then the weighted Birkhoff average will not change. Unfortunately, when $a_\ell = \phi(\sigma^\ell(\jj))$ this does not immediately work: the potential $\phi$ might depend on not only the first symbol, and thus modifying $\ii$ on positions $\ell \in [n_k, m_k]$ might also change $\phi(\sigma^\ell \ii)$ for other $\ell$'s. Fortunately, the potential $\phi$ is by assumption continuous, thus it has uniformly decreasing variations, and hence the change of $\phi(\sigma^\ell \ii)$ for $m_k \leq \ell < n_{k+1}$ is bounded by $\rho_{n_{k+1}-\ell}$.

 In particular,
\begin{equation}\label{eq:bound}
|\phi(\sigma^\ell\ii)-\phi(\sigma^\ell\jj)|\leq\begin{cases}
	\rho_0 & \text{if }n_k\leq\ell<m_k \\
	\rho_{n_{k+1}-\ell} & \text{if }m_k\leq \ell<n_{k+1}
\end{cases}
\end{equation}
for some $k$. This bound on the effect of modification of $\ii$ turns out to be sufficient to prove that the weighted Birkhoff average does not change.

Let $\varepsilon>0$ be arbitrary but fixed and let $N$ be such that $\rho_N\leq\varepsilon$. Let $k\in\N$ be such that $n_k\leq n\leq n_{k+1}-1$.  To prove the claim, we show that the sequence $\mathcal{I}_n:=\left|\frac{1}{S_n}\sum_{\ell=0}^{n-1}s_\ell\phi(\sigma^\ell\jj)-\frac{1}{S_n}\sum_{\ell=0}^{n-1}s_\ell\phi(\sigma^\ell\ii)\right|$
	converges to zero. By \eqref{eq:bound},
	\[
	\begin{split}
	\mathcal{I}_n&\leq\frac{\rho_0}{S_n}\sum_{\ell=1}^{k-1}(S_{m_\ell}-S_{n_\ell})+\frac{1}{S_n}\sum_{\ell=1}^{k-1}\sum_{p=m_\ell}^{n_{\ell+1}-1}s_p\rho_{n_{\ell+1}-p}+\frac{1}{S_n}\sum_{\ell=n_k}^{n-1}s_\ell|\phi(\sigma^\ell\ii)-\phi(\sigma^\ell\jj)|\\
	&=:\mathcal{I}_n^{(1)}+\mathcal{I}_n^{(2)}+\mathcal{I}_n^{(3)}.
	\end{split}
	\]
	Applying \eqref{eq:bound} again, if $n\leq m_k-1$ then
	$$
	\mathcal{I}_n^{(3)}\leq\frac{S_n-S_{n_k}}{S_n}\rho_0\leq \frac{S_{m_k}-S_{n_k}}{S_{m_k}}\rho_0,
	$$
	and if $m_k\leq n$ then
	\[
	\begin{split}
	\mathcal{I}_n^{(3)}&=\frac{S_{m_k}-S_{n_k}}{S_n}\rho_0+\frac{1}{S_n}\sum_{p=m_k}^{n-1}s_p\rho_{n_{k+1}-p}\leq\frac{S_{m_k}-S_{n_k}}{S_{m_k}}\rho_0+\frac{1}{S_n}\sum_{p=m_k}^{n-1}s_p\rho_{n_{k+1}-p}\\
	&\frac{S_{m_k}-S_{n_k}}{S_{m_k}}+\frac{Ns_0\rho_0}{S_n}+\frac{\varepsilon}{S_n}\sum_{p=m_k}^{n-N-1}s_p\leq\frac{S_{m_k}-S_{n_k}}{S_{m_k}}+\frac{Ns_0\rho_0}{S_n}+\varepsilon.
	\end{split}
	\]
	Hence, $\limsup_{n\to\infty}\mathcal{I}^{(3)}_n\leq\varepsilon$ by Lemma~\ref{lem:UBAR}.
	
	On the other hand,
	\[
	\begin{split}
	\mathcal{I}_n^{(2)}\leq \frac{\varepsilon}{S_n}\sum_{\ell=1}^{k-1}(S_{n_{\ell+1}-N}-S_{m_{\ell}})+\frac{\rho_0s_0(k-1)N}{S_n}\leq \varepsilon+\frac{\rho_0s_0(k-1)N}{S_n}.
	\end{split}\]
	By Lemma~\ref{lem:UBAR}, $\limsup_{n\to\infty}\mathcal{I}_n^{(2)}\leq\varepsilon$. Finally, applying Lemma~\ref{lem:UBAR} again
	$$
	\mathcal{I}^{(1)}_n\leq \frac{\rho_0}{S_{m_{k-1}}}\sum_{\ell=1}^{k-1}(S_{m_\ell}-S_{n_\ell})\to0
	$$
	as $n\to\infty$. Combining the above, $\limsup_{n\to\infty}\mathcal{I}_n\leq2\varepsilon$, and since $\varepsilon>0$ was arbitrary, the claim of the lemma follows.}
\end{proof}

\begin{proof}[Proof of Theorem~\ref{thm:packing}]
	Since $L(\phi)$ is an interval, for every $\alpha\notin L(\phi)$ either $\{\chi_\phi^-\leq\alpha\}$ or $\{\chi_\phi^+\geq\alpha\}$ is an empty set. {It follows by a straightforward adaptation of the proof of \cite[Theorem~3.1]{S1999}.}
	
	Thus, by Lemma~\ref{lem:trivialbound}, $E_\phi^s(\alpha)=\emptyset$ for $\alpha\notin L(\phi)$.
	
	Let $\alpha\in L(\phi)$ and let $\ii\in E^s_\phi(\alpha)$ be arbitrary but fixed. By \cite[Theorem 1]{T1982}, it is enough to show that there exists a probability measure $\mu$ such that $\mu(E_\phi^s(\alpha))=1$ and
	$$
	\limsup_{n\to\infty}\frac{1}{n}\log\mu([\ii|_n])=h_{\rm top}(\Sigma_{\bf A})\text{ for $\mu$-almost every }\ii.
	$$
	Let $\nu$ be the shift invariant ergodic measure on $\Sigma_{\bf A}$ for which $h_\nu=h_{\rm top}(\Sigma_{\bf A})$. { We remind that there exists a positive integer $r$ such that the $r$-th power of the adjacency matrix $A$ has strictly positive entries. In other words, for any two admissible words $\ii, \jj\in\Sigma_{\mathbf{A},*}$ there exist a word $\kk$ of length $r$ such that the word $\ii\kk\jj$ is admissible. Furthermore, $\kk$ depends only on the last symbol of $\ii$ and the first symbol of $\jj$. Let us define a map $\kk\colon\mathcal{A}\times\mathcal{A}\mapsto\Sigma_{\mathbf{A},r}$ such that $\ii\kk(i_{|\ii|},j_0)\jj$ is admissible in arbitrary but fixed way.} Let $\{n_k\}_{k\in\N}$ and $\{m_k\}_{k\in\N}$ be sequences as in Lemma~\ref{lem:UBAR}. {Let us define a subset of $F\subset\Sigma_{\mathbf{A}}$ as follows
	\[\begin{split}
	F=\{\jj\in\Sigma_{\mathbf{A}}:&\jj|_{m_{\ell-1}}^{n_\ell-1}=\ii|_{m_{\ell-1}}^{n_\ell-1},\ \jj|_{n_\ell}^{n_\ell+r-1}=\kk(i_{n_\ell-1},j_{n_\ell+r})\text{ and }\\
	&\jj|_{m_\ell-r+1}^{m_\ell-1}=\kk(j_{m_\ell-r},i_{m_\ell})\text{ for every }\ell\in\N\}
\end{split}
	\]
By Lemma~\ref{lem:maintopacking}, $F\subseteq E_\phi^s(\alpha)$. Let us define $\mu$ over cylinders $[\jj]$ of length $m_k$ with $[\jj]\cap F\neq\emptyset$ as follows:
	$$
	\mu([\jj])=\prod_{\ell=1}^{k}\nu([\jj|_{n_\ell+r}^{m_\ell-r}]).
	$$
	It is easy to see that $\mu$ is a well-defined probability measure supported on $F$ by
$$
\sum_{\jj\in\Sigma_{\mathbf{A},m_{k+1}-m_k}}\mu([\ii\jj])=\mu([\ii])\sum_{\jj\in\Sigma_{\mathbf{A},m_{k+1}-m_k-2r}}\nu([\jj])=\mu([\ii])
$$ for every $\ii\in\Sigma_{\mathbf{A},m_k}$.} On the other hand, for $\mu$-almost every $\jj$
\[
\begin{split}
	\limsup_{n\to\infty}\frac{-1}{n}\log\mu([\jj|_n])&\geq\limsup_{k\to\infty}\frac{-1}{m_k}\sum_{\ell=1}^{k}\log\nu([\jj|_{n_\ell+r}^{m_\ell-r}])\\
	&\geq\limsup_{k\to\infty}\frac{-1}{m_k}\log\nu([\jj|_{n_k+r}^{m_k-r}])\\
	&=h_\nu\limsup_{k\to\infty}\frac{m_k-n_k-2r}{m_k}=h_\nu=h_{\rm top}(\Sigma_{\bf A}),
\end{split}
\]
where the first equality follows by the Shannon-McMillan-Breiman Theorem and Lemma~\ref{lem:UBAR}, while the last equality follows by the choice of the measure $\nu$.
\end{proof}


\begin{thebibliography}{ds}	
	\bibitem{BRS} B. B\'ar\'any, M. Rams and R. Shi: On the multifractal spectrum of weighted Birkhoff averages. {\em Discrete \& Contin. Dynam. Sys.} (2022), doi: 10.3934/dcds.2021199.

	\bibitem{BSS} L. Barreira, B. Saussol and J. Schmeling:	Distribution of frequencies of digits via multifractal analysis. {\em J. Number Theory} {\bf 97} (2002), no. 2, 410-438.
	
	\bibitem{BSS2} L. Barreira. B. Saussol and J. Schmeling: Higher-dimensional multifractal analysis. {\em J. Math. Pures Appl. (9)} {\bf 81} (2002), no. 1, 67-91.

\bibitem{B1935}
A. S. Besicovitch, On the sum of digits of real numbers represented in the dyadic system, {\em Math. Ann.} 110 (1935), no. 1, 321–330.

\bibitem{Bo} R. Bowen:
{\it Equilibrium states and the ergodic theory of Anosov diffeomorphisms.}
Lecture Notes in Mathematics, 470. Springer-Verlag, Berlin, 2008.

\bibitem{E1949}
H. G. Eggleston, The fractional dimension of a set defined by decimal properties, {\em Quart. J. Math.}, Oxford Ser. 20 (1949), 31–36.

\bibitem{F}
A. H. Fan: Multifractal analysis of weighted Birkhoff averages. {\em Adv. Math.} 377 (2021), 107488.

\bibitem{FFW}
A. H. Fan, D. J. Feng and J. Wu: Recurrence, dimension and entropy. {\em J. London Math. Soc. (2)} {\bf 64} (2001), no. 1, 229-244.

\bibitem{FST2013}
A. H. Fan, J. Schmeling, and S. Troubetzkoy: A multifractal mass transference principle for Gibbs measures with applications to dynamical Diophantine approximation. {\em Proceedings of the London Mathematical Society}, 2013 107(5), 1173-1219.

\bibitem{FLW2002}
D. Feng, K. Lau and J. Wu: Ergodic limits on the conformal repellers. {\em Adv. Math.}, {\bf 169} (2002), no. 1, 58-91.

\bibitem{K} G. Keller:
{\it Equilibrium states in ergodic theory.}
London Mathematical Society Student Texts, 42. Cambridge University Press, Cambridge, 1998.

\bibitem{O1998}
E. Olivier: Analyse multifractale de fonctions continues.  {\em C. R. Acad. Sci. Paris S\'er.} I Math. 326
(1998), no. 10, 1171-1174.




\bibitem{O}
L. Olsen: Multifractal analysis of divergence points of deformed measure theoretical Birkhoff averages. {\em J. Math. Pures Appl. (9)} {\bf 82} (2003), no. 12, 1591-1649.

\bibitem{PU}
F. Przytycki and M. Urba\'nski:
{\it Conformal fractals: ergodic theory methods.}
London Mathematical Society Lecture Note Series, 371. Cambridge University Press, Cambridge, 2010.

\bibitem{S1999}
J. Schmeling: On the completeness of multifractal spectra. {\em Ergodic Theory Dynam. Systems} (1999), 19, 1595-1616.	

\bibitem{S1996}
A. Shiryaev: Probability, 2nd Edition, Vol. 95 of {\em Graduate Texts in
Mathematics}, Springer, 1996.

\bibitem{T2017}
T. Tao: Equivalence of the logarithmically averaged Chowla and Sarnak conjectures. {\em Number theory–Diophantine problems, uniform distribution and applications}. Springer, Cham, 2017. 391-421.

\bibitem{T1982}
C. Tricot: Two definitions of fractional dimension. {\em Math. Proc. Cambridge Philos. Soc.} {\bf 91} (1982), no. 1, 57-74.

	
	
	
	
	

	
\end{thebibliography}
\end{document}